\documentclass[10pt, reqno]{amsart}
\usepackage[utf8]{inputenc}
\usepackage{bbm,amssymb,mathtools,mathrsfs}
\usepackage[dvipsnames]{xcolor}
\usepackage{graphicx,titletoc}
\usepackage[titletoc]{appendix}
\definecolor{darkblue}{rgb}{0.0, 0.0, 0.55}
\definecolor{bordeaux}{rgb}{0.34, 0.01, 0.1}
\usepackage[hyphens]{url}
\usepackage[colorlinks,breaklinks,linkcolor=bordeaux,citecolor=darkblue,urlcolor=black,hypertexnames=true]{hyperref}
\mathtoolsset{showonlyrefs,showmanualtags}

\usepackage[shortlabels]{enumitem}

\newtheorem{thm}{Theorem}[section]
\newtheorem{cor}[thm]{Corollary}
\newtheorem{lem}[thm]{Lemma}
\newtheorem{prop}[thm]{Proposition}
\newtheorem{conj}[thm]{Conjecture}
\theoremstyle{definition}

\newtheorem{rem}[thm]{Remark}
\newtheorem{exa}[thm]{Example}

\DeclareMathOperator{\rk}{rank}

\DeclareMathOperator{\id}{id}

\def\R{\mathbb R}
\def\Q{\mathbb Q}
\def\Z{\mathbb Z}
\def\N{\mathbb N}

\def\ve{\varepsilon}

\newcommand{\Langle}{\mathop{<}\!}
\newcommand{\Rangle}{\!\mathop{>}}
\newcommand{\cx}[1]{[x_1,\dots,x_{#1}]}
\newcommand{\ncx}[1]{\Langle x_1,\dots,x_{#1}\Rangle}
\newcommand{\ncxsq}[1]{\Langle x_1^2,\dots,x_{#1}^2\Rangle}
\newcommand{\px}[1]{\R [x_1,\dots,x_{#1}]}
\newcommand{\ncpx}[1]{\R\!\Langle x_1,\dots,x_{#1}\Rangle}

\renewcommand{\vec}[1]{\mathbf{#1}}
\newcommand{\0}{{\color{lightgray}0}}

\linespread{1.08}

\title[Noncommutative Hunter's theorem]{A noncommutative generalization of Hunter's positivity theorem}

\author[S.R.~Garcia]{Stephan Ramon Garcia}
\address{Department of Mathematics and Statistics, Pomona College, 610 N. College Ave., Claremont, CA 91711} 
\email{stephan.garcia@pomona.edu}
\urladdr{\url{https://stephangarcia.sites.pomona.edu/}}

\author[J.~Vol\v{c}i\v{c}]{Jurij Vol\v{c}i\v{c}}
\address{Department of Mathematics, University of Auckland, New Zealand} 
\email{jurij.volcic@auckland.ac.nz}

\date{\today}

\keywords{Hunter's inequality, noncommutative polynomial, complete homogeneous symmetric polynomial, hermitian sum of squares}

\subjclass[2020]{05E05, 13J30, 15A45, 16S10}

\begin{document}

\begin{abstract}
Hunter proved that the complete homogeneous symmetric polynomials of even degree are positive definite.  We prove a noncommutative generalization of this result, in which the scalar variables are replaced with hermitian operators.  We provide a sharp lower bound and a sum of hermitian squares representation that are novel even in the scalar case.  
\end{abstract}

 \thanks{SRG was supported by NSF Grants DMS-2054002 and DMS-2452084.}

\maketitle

\section{Introduction}

The \emph{complete homogeneous symmetric (CHS) polynomial} of degree $d$ in $n$ (commuting) variables is the sum 
\begin{equation}\label{eq:CHS}
h_d(x_1,x_2,\ldots,x_n) := \sum_{1 \leq i_1 \leq \cdots \leq i_{d} \leq n} x_{i_1} x_{i_2}\cdots x_{i_d},
\end{equation}
of all $\binom{n+d-1}{d}$ monomials of degree $d$ in $x_1,x_2,\ldots,x_n$ \cite[Sec.~7.5]{StanleyBook2}.  For example,
$h_2(x_1,x_2)= x_1^2+x_1 x_2+x_2^2$ and
$h_4(x_1,x_2)= x_1^4  + x_1^3 x_2 + x_1^2 x_2^2 + x_1 x_2^3 + x_2^4$.
Hunter proved that CHS polynomials of even degree are nonnegative; more precisely, for $d\in\N$ and $\vec{x}= (x_1,x_2,\ldots,x_n)$ we have
\begin{equation}\label{e:HunterInequality}
h_{2d}(x_1, x_2, \ldots, x_n)\geq \frac{\| \vec{x} \|_2^{2d}}{2^d d!} ,
\end{equation} 
with equality at $\vec{x}\neq \vec{0}$ if and only if $d=2$ and $x_1+x_2+\cdots +x_n=0$ \cite{Hunter}.

Even-degree CHS polynomials give rise to prominent instances of symmetric polynomial inequalities, studied in algebraic combinatorics and real algebraic geometry \cite{Pro,Tim,CGS,BR}. 
In this paper, we explore a noncommutative analogue of CHS polynomial positivity. 
This is done from the perspective of free real algebraic geometry \cite{OHMP,HKM}, which investigates polynomial inequalities in several matrix or operator variables without dimension restrictions.
We define noncommutative CHS polynomials and prove an explicit Hunter-like lower bound for them.  Our approach yields a noncommutative sum of hermitian squares (SOHS) representation that appears novel even in the commutative case.  The noncommutative generalization of Hunter's theorem sheds new light on the classical case: we obtain better lower bounds than Hunter's in many situations while also providing sum of squares (SOS) representations for the classical CHS polynomials.

To state our results, we require some notation.
For $n\in\N$, let $\cx{n}$ denote the set of monomials in $n$ commuting variables and let $\px{n}$ denote the corresponding  real polynomial ring.  Let $\ncx{n}$ denote the set of all words in $n$ freely noncommuting variables and let $\ncpx{n}$ denote the corresponding real free $*$-algebra of noncommutative polynomials, 
where the involution $*$ is determined by $x_j^*=x_j$ for all $j=1,2,\ldots,n$.

Let $\alpha:\ncx{n}\to\cx{n}$ denote abelianization, which maps a noncommutative word to a monomial by forgetting the order.
For example, $\alpha(x_1 x_2 x_1) = x_1^2 x_2$.  A counting argument shows that
\begin{equation}\label{e:count}
\big|\alpha^{-1}(x_1^{k_1}\cdots x_n^{k_n})\big|=\binom{k_1+\cdots+k_n}{k_1,\dots,k_n}.
\end{equation}
The map $\alpha$ extends to a natural algebra homomorphism $\pi:\ncpx{n}\to \px{n}$. 
Let $\sigma:\px{n}\to\ncpx{n}$ denote the linear map
\begin{equation*}
 \sigma (m) := \frac{1}{|\alpha^{-1}(m)|}\sum_{w\in\alpha^{-1}(m)}w ,
 \qquad m \in \cx{n}.
\end{equation*}
For example, $\sigma(x_1 x_2)= \frac{1}{2}(x_1 x_2 + x_2 x_1)$.
One may view $\sigma$ as the fully symmetrized noncommutative lift.
Note that $\pi\circ\sigma=\id_{\px{n}}$. 

The \emph{noncommutative complete homogeneous symmetric (NCHS) polynomial} of degree $d$ in $n$ (noncommuting) variables is 
\begin{equation}\label{eq:Hd}
H_d(x_1,\dots,x_n) := \sigma\big(h_d(x_1,\ldots,x_n)\big) \in\ncpx{n}.
\end{equation}
For example, 
\begin{equation}\label{e:H2}
H_2(x_1,x_2) = x_1^2 + \tfrac{1}{2}(x_1 x_2 + x_2 x_1)+ x_2^2 
\end{equation}
and
\begin{equation}
\begin{split}\label{e:H4}
\hspace{-15pt} H_4(x_1,x_2)
&= x_1^4 + \tfrac{1}{4}( x_1^3 x_2 + x_1^2 x_2 x_1 + x_1 x_2 x_1^2 + x_2 x_1^3)\\
&\quad    + \tfrac{1}{6}( x_1^2 x_2^2 + x_1 x_2 x_1 x_2 + x_1 x_2^2 x_1 + x_2 x_1^2 x_2 + x_2 x_1 x_2 x_1 + x_2^2 x_1^2) \\
&\quad + \tfrac{1}{4}(x_1 x_2^3 + x_2 x_1 x_2^2 + x_2^2 x_1 x_2 + x_2^3 x_1) + x_2^4.
\end{split}
\end{equation}
Note that the definition \eqref{eq:Hd} differs from that of Gelfand et al. \cite{ncsym}, where noncommutative complete homogeneous symmetric functions are formal objects that specialize to noncommutative rational functions or non-hermitian noncommutative polynomials \cite[Section 7]{ncsym}. However, since we are interested in positivity, the definition \eqref{eq:Hd} is natural as it produces a hermitian noncommutative polynomial.
In fact, the definition \eqref{eq:Hd} fits in the framework of noncommutative symmetric functions as introduced by Rosas and Sagan \cite{RS} (our NCHS polynomials are scalar multiples of some of their noncommutative analogues of CHS polynomials).

We can now state our main result:

\begin{thm}\label{thm:main}
Let $n,d \in \N$.  
\begin{enumerate}[leftmargin=*]
    \item The noncommutative polynomial $H_{2d}(x_1,\dots,x_n)$ is a sum of $\binom{n-1+d}{d}$ hermitian squares in $\ncpx{n}$. This number of hermitian squares is minimal.

    \item For all $k\in\N$ and all hermitian operators $X_1\dots,X_n$ on a Hilbert space, \begin{equation*}
H_{2d}(X_1,\dots,X_n)\succeq\mu_{n,d} (X_1^{2d}+\cdots+X_n^{2d})    ,
\end{equation*}
in which $\succeq$ denotes the L\"owner partial order and
\begin{equation*}
\mu_{n,d}= 
\begin{cases}
1& \text{if $n=1$},\\[5pt]
\frac{\binom{n-1+2d}{2d}}{\binom{n-1+d}{d}\left(\binom{n-1+d}{d}+1\right)}& \text{if $n\ge2$ and $d$ is odd},\\[8pt]
\frac{\binom{n-1+2d}{2d}}{\binom{n-1+d}{d}\left(\binom{n-1+d}{d}+n-1\right)}&
\text{if $n\ge2$ and $d$ is even}.
\end{cases}
\end{equation*}
This lower bound is best possible.
\end{enumerate}
\end{thm}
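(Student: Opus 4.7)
My strategy is to reduce both parts of Theorem~\ref{thm:main} to properties of a single explicit scalar matrix. Set $M=\binom{n-1+d}{d}$ and let $A$ be the $M\times M$ matrix indexed by $\cM_d$ with entries
\[
A_{m_1,m_2}=\binom{d}{\vec k}\binom{d}{\vec l}\Big/\binom{2d}{\vec k+\vec l}=\binom{2d}{d}^{-1}\prod_{i=1}^{n}\binom{k_i+l_i}{k_i},
\]
for $m_1=x^{\vec k}$ and $m_2=x^{\vec l}$. The core identity is $H_{2d}=\vec v^{*}A\vec v$, where $\vec v=(\sigma(m))_{m\in\cM_d}^{T}$; its verification matches coefficients of each length-$2d$ word and is unambiguous because each such word admits a unique splitting into two length-$d$ halves.

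For part~(i), I would show $A\succeq 0$ has rank exactly $M$ using the Chu--Vandermonde identity $\binom{k+l}{k}=\sum_{j}\binom{k}{j}\binom{l}{j}$ to factor $A=\binom{2d}{d}^{-1}\Psi^{T}\Psi$, where $\Psi_{\vec j,\vec k}=\prod_{i}\binom{k_i}{j_i}$ and $\vec j\in\N_0^n$ with $|\vec j|\le d$. The rows of $\Psi$ indexed by $|\vec j|=d$ form the identity on $\cM_d$ (since $\prod_i\binom{k_i}{j_i}=\delta_{\vec j,\vec k}$ when $|\vec j|=d=|\vec k|$), so $\Psi$ has full column rank $M$. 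Cholesky $A=L^{T}L$ then produces $H_{2d}=\sum_{i=1}^{M}p_i^2$ with $p_i=(L\vec v)_i$ hermitian of degree $d$. For minimality, any SOHS representation $H_{2d}=\sum_i q_i^{*}q_i$ forces each $q_i$ to be homogeneous of degree $d$ (by inductively extracting $\sum_i q_i^{(k)*}q_i^{(k)}=0$ as the degree-$2k$ part for $k<d$), and then its Gram matrix on length-$d$ noncommutative monomials is uniquely determined by $H_{2d}$, because $(w_1,w_2)\mapsto w_1^{*}w_2$ is injective into length-$2d$ words. Collapsing this Gram matrix by abelianization identifies its rank with $\rk A=M$.

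Part~(ii) reduces to the scalar inequality $A-\mu P\succeq 0$, with $P$ the diagonal projection onto the indices $\{x_i^d\}_{i=1}^n$: indeed $H_{2d}(\vec X)-\mu\sum_i X_i^{2d}=\vec V^{*}(A-\mu P)\vec V$ for $\vec V=(\sigma(m)(\vec X))_m$, so any Cholesky factorization of $A-\mu P$ yields the L\"owner inequality. The largest admissible $\mu$ equals $\lambda_{\min}(S)$, where $S$ is the Schur complement of $A$ onto $V_0=\ran P$. Since $A$ and $P$ commute with the natural $S_n$-action on $\cM_d$, so does $S$; as an $S_n$-equivariant $n\times n$ matrix it has the form $\alpha I+\beta(J-I)$, with spectrum $\{\alpha-\beta,\ \alpha+(n-1)\beta\}$ on the standard and trivial subrepresentations.

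Evaluating $\alpha\pm\beta$ in closed form is the main technical hurdle. I would extract them from the dual characterization $\vec u^{*}S\vec u=\min\{\vec w^{*}A\vec w:E^{T}\vec w=\vec u\}$ applied to the test directions $\vec u_t=\sum_i e_{x_i^d}$ and $\vec u_s=e_{x_1^d}-e_{x_2^d}$, using the clean decomposition $\Psi^{T}\Psi=I+\Psi_2^{T}\Psi_2$ (with $\Psi_2$ the rows of $\Psi$ having $|\vec j|<d$) to recast each minimization as an $\ell^2$-optimization over Taylor coefficients at $\vec 1$ of a homogeneous degree-$d$ polynomial with prescribed pure-power coefficients. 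The closed-form evaluations rely on orbit-summation identities for $\sum_{|\vec k|=d}\prod_i\binom{k_i}{j_i}\binom{k_i}{j_i'}$ and produce the theorem's formulas, with the parity dichotomy emerging because $\beta$ changes sign with the parity of $d$: $\beta>0$ for $d$ odd makes $\alpha-\beta$ the minimum (denominator factor $M+1$), while $\beta<0$ for $d$ even makes $\alpha+(n-1)\beta$ the minimum (denominator factor $M+n-1$). Sharpness is then established by exhibiting operator tuples that realize the null direction of $A-\mu_{n,d}P$.
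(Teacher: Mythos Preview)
Your approach to part~(i) is correct and offers a genuinely different route to the positive definiteness of the Gram matrix. Where the paper establishes that its matrix $\widetilde G_{n,d}$ (related to your $A$ by a diagonal congruence, since $\binom{d}{\vec k}=1$ on the pure-power indices) is positive definite via a Dirichlet integral over the standard simplex, your Chu--Vandermonde factorization $A=\binom{2d}{d}^{-1}\Psi^{T}\Psi$ is purely algebraic and arguably cleaner; the observation that the rows with $|\vec j|=d$ already form the identity is a pleasant way to read off full rank. The minimality argument is essentially the paper's: the noncommutative Gram matrix is uniquely determined because each degree-$2d$ word splits uniquely into two halves, and its rank agrees with that of the commutative matrix via abelianization.

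For part~(ii), your framework matches the paper's: both reduce to the largest $\mu$ with $A-\mu P\succeq 0$, pass to the Schur complement $S$ on the pure-power block, and use the $S_n$-equivariance to write $S=\alpha I+\beta(J-I)$ with eigenvalues $\alpha-\beta$ and $\alpha+(n-1)\beta$. The divergence is in how these are evaluated. The paper works with $S^{-1}$, which is the principal $n\times n$ block of $\widetilde G_{n,d}^{-1}$, and determines its diagonal and off-diagonal entries $\rho_0,\rho_1$ by explicitly identifying the preimage of $x_1^d$ under $\widetilde G_{n,d}$ (Proposition~\ref{p:inverse}), a self-contained binomial identity proved via Vandermonde convolution. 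Your proposed route---the variational characterization $\vec u^{*}S\vec u=\min_{E^{T}\vec w=\vec u}\vec w^{*}A\vec w$ together with $\Psi^{T}\Psi=I+\Psi_2^{T}\Psi_2$---is plausible and the reinterpretation of $\Psi\vec w$ as Taylor coefficients at $\vec 1$ is attractive, but you do not carry the minimization out. The passage from ``$\ell^2$-optimization over Taylor coefficients with prescribed pure-power terms'' to the closed forms for $\alpha\pm\beta$ is precisely the technical core, and your ``orbit-summation identities'' are asserted rather than established. As written, this is a gap: part~(ii) is exactly this explicit evaluation, and your sketch does not verify it produces the stated $\mu_{n,d}$.

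There is a second gap in the sharpness claim. The implication ``$A-\mu P\succeq 0$ $\Rightarrow$ L\"owner inequality for all hermitian tuples'' is immediate from your identity $H_{2d}-\mu\sum_i x_i^{2d}=\vec v^{*}(A-\mu P)\vec v$, but the converse is not automatic: the evaluation vectors $\vec V(\vec X)\xi$ need not fill out $\R^M$, so ``exhibiting operator tuples that realize the null direction of $A-\mu_{n,d}P$'' requires justification you have not supplied. The paper closes this loop via a Positivstellensatz for homogeneous noncommutative polynomials (Lemma~\ref{l:geneig}, citing McCullough--Putinar), which shows that positivity on symmetric matrices of a fixed computable size already forces the Gram matrix to be positive semidefinite. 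You need either to invoke such a result or to construct explicit witnesses; note that Example~\ref{exa22} shows equality is \emph{not} attained at any single tuple when $n=d=2$, so a limiting family is genuinely required.
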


In general, the fully symmetrized noncommutative lift does not preserve inequalities. While positivity of even-degree CHS polynomials may be seen as a special case of positivity of Schur polynomials for even partitions, the fully symmetrized noncommutative lifts of the latter are not positive semidefinite in general (Remark \ref{r:noSchur}), lending more significance to Theorem \ref{thm:main}. 
Furthermore, while the exact lower bounds for CHS polynomials are not known, the rigidity of noncommuting variables, often leveraged in free real algebraic geometry, allows us to determine the exact (and explicit) lower bounds for NCHS polynomials as in Theorem \ref{thm:main}. 
These noncommutative results also carry implications for the classical CHS polynomials, for which we obtain SOS representations and new lower bounds.

\begin{cor}\label{c:hunter}
$h_{2d}(x_1,\dots,x_n)$ is a sum of $\binom{n-1+d}{d}$ squares in $\px{n}$.
Furthermore, $h_{2d}(x_1,\dots,x_n)\ge \frac{\mu_{n,d}}{n^{d-1}} \| \vec{x} \|_2^{2d}$ for all $\vec{x}\in\R^n$.
\end{cor}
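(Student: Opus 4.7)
The plan is to obtain both parts of Corollary \ref{c:hunter} as consequences of Theorem \ref{thm:main} by passing through the abelianization map $\pi:\ncpx{n}\to\px{n}$ and then evaluating at commuting scalars.

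For part (i), I would start from a sum of hermitian squares decomposition $H_{2d}=\sum_{i=1}^N p_i^*p_i$ with $N=\binom{n-1+d}{d}$, provided by Theorem \ref{thm:main}(i). Since $\pi$ is an $\R$-algebra homomorphism, the key observation is that $\pi$ respects the involution: for any word $w\in\ncx{n}$, the reversal $w^*$ has the same abelianization as $w$, so $\pi(p^*)=\pi(p)$ for every $p\in\ncpx{n}$. Applying $\pi$ to the SOHS decomposition then yields $\pi(H_{2d})=\sum_{i=1}^N \pi(p_i)^2$. Because $\pi\circ\sigma=\id_{\px{n}}$, the left side equals $h_{2d}(x_1,\dots,x_n)$, giving the desired sum-of-squares representation in $\px{n}$ with $\binom{n-1+d}{d}$ terms.

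For part (ii), I would specialize the operator inequality from Theorem \ref{thm:main}(ii) to the commutative setting by taking the Hilbert space to be one-dimensional and letting each $X_j$ be a real scalar $x_j$. Since commuting scalar evaluation of a noncommutative polynomial coincides with evaluation of its abelianization, $H_{2d}(x_1,\dots,x_n)=\pi(H_{2d})(x_1,\dots,x_n)=h_{2d}(x_1,\dots,x_n)$. Theorem \ref{thm:main}(ii) therefore gives
\begin{equation*}
h_{2d}(x_1,\dots,x_n)\ge \mu_{n,d}\,(x_1^{2d}+\cdots+x_n^{2d}).
\end{equation*}
To finish, I would invoke the power mean inequality (equivalently, convexity of $t\mapsto t^d$ on $[0,\infty)$ applied to $t_j=x_j^2$):
\begin{equation*}
\frac{x_1^{2d}+\cdots+x_n^{2d}}{n}\ge\left(\frac{x_1^2+\cdots+x_n^2}{n}\right)^d,
\end{equation*}
which rearranges to $x_1^{2d}+\cdots+x_n^{2d}\ge n^{1-d}\|\vec{x}\|_2^{2d}$. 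Combining the two inequalities yields $h_{2d}(\vec{x})\ge \frac{\mu_{n,d}}{n^{d-1}}\|\vec{x}\|_2^{2d}$.

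There is no real obstacle in this argument; both parts are short derivations once Theorem \ref{thm:main} is in hand. The only subtle point worth stating explicitly is that $\pi$ is $*$-compatible (so hermitian squares push down to squares), which is immediate from $\alpha(w^*)=\alpha(w)$ for all words $w$; everything else is a direct specialization.
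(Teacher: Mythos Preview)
Your proposal is correct and follows essentially the same approach as the paper: apply $\pi$ to the SOHS representation of $H_{2d}$ (using $\pi(p^*)=\pi(p)$) to get the SOS statement, and for the inequality specialize Theorem~\ref{thm:main}(ii) to scalars and combine with the power-mean / $\ell^2$--$\ell^{2d}$ norm comparison. The paper states the norm estimate as $\|\vec{x}\|_2^{2d}\le n^{d-1}\|\vec{x}\|_{2d}^{2d}$, which is exactly your power-mean inequality rewritten.
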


Observe that $\frac{\mu_{n,d}}{n^{d-1}}>\frac{1}{2^d d!}$ whenever $d$ is sufficiently larger than $n$, in which case Corollary \ref{c:hunter} provides a tighter estimate than \eqref{e:HunterInequality}.
Also, our method appears to be the first explicit method to express CHS polynomials as sums of squares. Another approach to SOS representations for (commutative) CHS polynomials was suggested by Speyer and Tao \cite{Tao}, although it is unclear if their approach was ultimately pursued and formally published. 

This paper is organized as follows.  Section \ref{Section:Hunter} contains the proof of Theorem \ref{thm:main}(i) (see Proposition \ref{p:middle}).  We establish Theorem \ref{thm:main}(ii) in Section \ref{Section:LowerBound} (see Proposition \ref{p:bound}).  We conclude in Section \ref{Section:Remarks} with several examples and remarks.

\section{Noncommutative Hunter's theorem}\label{Section:Hunter}

In order to study positivity of NCHS polynomials, we investigate spectral features of certain combinatorial matrices. 
We start by introducing some notation used throughout the paper.
Let $n,d\in\N$. 
By $\cx{n}_d\subset\cx{n}$ and $\ncx{n}_d\subset\cx{n}$ we denote the sets of monomials and words of degree $d$, respectively. Similarly, let $\px{n}_d\subset \px{n}$ and $\ncpx{n}_d\subset \ncpx{n}$ denote the subspaces of homogeneous polynomials of degree $d$.
When used as index sets, we endow $\cx{n}_d$ and $\ncx{n}_d$ with lexicographic order. 
Define the rational matrices
\begin{equation}\label{e:G}
\widetilde G_{n,d}=\left[ \frac{1}{|\alpha^{-1}(uv)|} \right]_{u,v\in \cx{n}_d},
\qquad
G_{n,d}=\left[ \frac{1}{|(\alpha^{-1}\circ\pi)(u^*v)|} \right]_{u,v\in \ncx{n}_d}\!\!\!.
\end{equation}
By \eqref{e:count}, their entries are reciprocals of certain multinomial coefficients.
Throughout the paper, we often interpret matrices \eqref{e:G} as linear maps $\widetilde{G}_{n,d}:\px{n}_d\to\px{n}_d$ and $G_{n,d}:\ncpx{n}_d\to\ncpx{n}_d$ in a natural way.
For our purposes, the crucial properties of $\widetilde{G}_{n,d}$ and $G_{n,d}$ are the following.

\begin{prop}\label{p:G}
Let $n,d\in\N$.
\begin{enumerate}[leftmargin=*]
\item $\widetilde{G}_{n,d}$ is positive definite.
\item $G_{n,d}$ is positive semidefinite, 
$\ker G_{n,d} = \ker\pi|_{\ncpx{n}\!{}_d}$, and 
$\rk G_{n,d}= \binom{n-1+d}{d}$.
\end{enumerate}
\end{prop}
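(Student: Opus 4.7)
The plan is to first establish (i) by realizing $\widetilde{G}_{n,d}$ as a Gram matrix of monomials against a positive measure, and then to deduce (ii) by factoring $G_{n,d}$ through $\widetilde{G}_{n,d}$ via abelianization.

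For (i), a natural candidate comes from the Gamma-function identity $k!=\int_0^\infty t^k e^{-t}\,dt$. For $u=x_1^{a_1}\cdots x_n^{a_n}$ and $v=x_1^{b_1}\cdots x_n^{b_n}$ in $\cx{n}_d$, the product $uv$ has content $(a_1+b_1,\dots,a_n+b_n)$, so by \eqref{e:count}
\[
\widetilde{G}_{n,d}[u,v]=\frac{(a_1+b_1)!\cdots(a_n+b_n)!}{(2d)!}=\frac{1}{(2d)!}\int_{[0,\infty)^n}u(t)\,v(t)\,e^{-(t_1+\cdots+t_n)}\,dt_1\cdots dt_n.
\]
Thus $(2d)!\,\widetilde{G}_{n,d}$ is the Gram matrix of the pairwise distinct monomials $u\in\cx{n}_d$ for an $L^2$ inner product with a positive density of full support on $[0,\infty)^n$. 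Since distinct monomials are linearly independent as measurable functions, this Gram matrix is positive definite.

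For (ii), the identity $x_j^*=x_j$ forces $\pi(u^*v)=\alpha(u)\alpha(v)$ for all $u,v\in\ncx{n}_d$, and hence $G_{n,d}[u,v]=\widetilde{G}_{n,d}[\alpha(u),\alpha(v)]$. Letting $A$ denote the $\cx{n}_d\times\ncx{n}_d$ incidence matrix defined by $A[m,w]=1$ if $\alpha(w)=m$ and $0$ otherwise, this identity reads $G_{n,d}=A^T\widetilde{G}_{n,d}A$; here $A$ is just the matrix of $\pi|_{\ncpx{n}_d}$ in the chosen bases. Combining this factorization with (i) yields $G_{n,d}\succeq 0$, and invertibility of $\widetilde{G}_{n,d}$ (say, via Cholesky $\widetilde{G}_{n,d}=C^TC$, so $G_{n,d}=(CA)^T(CA)$) gives $\ker G_{n,d}=\ker A=\ker\pi|_{\ncpx{n}_d}$ as well as $\rk G_{n,d}=\rk A$. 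Finally, $\pi\circ\sigma=\id$ makes $\pi|_{\ncpx{n}_d}$ surjective onto $\px{n}_d$, so $\rk A=\dim\px{n}_d=\binom{n-1+d}{d}$.

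The substantive step is finding the right positive representing measure in (i); once one notices that the entries of $\widetilde{G}_{n,d}$ are, up to a common factor, products of factorials, the exponential-measure representation is essentially forced, and everything afterwards is formal linear algebra. A Dirichlet density on the simplex would serve equally well, but the product-exponential form above keeps the bookkeeping cleanest.
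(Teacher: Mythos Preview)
Your proof is correct and follows the same two-step strategy as the paper: realize $\widetilde{G}_{n,d}$ as (a scalar multiple of) a Gram matrix against a positive measure, then deduce (ii) from the factorization $G_{n,d}=A^{T}\widetilde{G}_{n,d}A$ with $A$ the matrix of $\pi|_{\ncpx{n}_d}$ (the paper phrases this same factorization as a commutative square). The only cosmetic difference is the choice of measure in (i): you use the product exponential $e^{-\sum t_j}\,dt$ on $[0,\infty)^n$, whereas the paper uses Lebesgue measure on the simplex via the Dirichlet integral; your version has the slight advantage of reducing to independent one-variable Gamma integrals without citing the Dirichlet formula.
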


\begin{proof}
Define a linear functional
$\lambda:\px{n}\to\R$ by
\begin{equation*}
\lambda(p)=\int_{\Sigma_{n-1}} p\,{\rm d}\Sigma_{n-1},    
\end{equation*}
where $\Sigma_{n-1}=\{(t_1,\dots,t_n)\in\R_{\ge0}^n\colon t_1+\cdots+t_n=1\}$ is the standard $(n-1)$-simplex endowed with the Lebesgue measure. 
Note that $\lambda$ is nonnegative on nonnegative polynomials, and in particular on squares.
By the Dirichlet integral formula (see, e.g., the Dirichlet or multivariate beta distribution \cite[Section 3.2.5]{bayesian}) and \eqref{e:count},
\begin{equation}\label{e:int}
\begin{split}
\lambda(x_1^{k_1}\cdots x_n^{k_n})&=\sqrt{n}\frac{k_1!\cdots k_n!}{(k_1+\cdots+k_n+n-1)!}\\
&=\frac{\sqrt{n}(k_1+\cdots+k_n)!}{(k_1+\cdots+k_n+n-1)!}\cdot
\frac{1}{\binom{k_1+\cdots+k_n}{k_1,\dots,k_n}}\\
&=\frac{\sqrt{n}(k_1+\cdots+k_n)!}{(k_1+\cdots+k_n+n-1)!}\cdot
\frac{1}{|\alpha^{-1}(x_1^{k_1}\cdots x_n^{k_n})|}.
\end{split}
\end{equation}
If $p\in\px{n}$ is homogeneous and $\lambda(p^2)=0$, then $p|_{\Sigma_{n-1}}\equiv0$. Since $p$ vanishes on $\Sigma_{n-1}$ if and only if it is a multiple of $x_1+\cdots+x_n-1$, homogeneity implies that $p=0$.
Thus, $(p,q)\mapsto \lambda(pq)$ is an inner product on $\px{n}_d$. 

\medskip\noindent(i): By \eqref{e:int}, the matrix $\widetilde{G}_{n,d}$ is a positive multiple of the Gram matrix of the linearly independent vectors $\cx{n}_d$ with respect to an inner product. Thus, $\widetilde{G}_{n,d}$ is positive definite.

\medskip\noindent(ii): 
The functional $\lambda\circ\pi:\ncpx{n}\to\R$ is nonnegative on hermitian squares, and thus gives rise to a semi-inner product on $\ncpx{n}_d$. The matrix $G_{n,d}$ is a positive multiple of the Gram matrix of the linearly independent vectors $\ncx{n}_d$ with respect to it, and thus positive semidefinite. If $f\in \ncpx{n}$ is homogeneous, then $(\lambda\circ\pi)(f^*f)=0$ if and only if $\pi(f)=0$. Thus, $\ker G_{n,d}=\ker \pi|_{\ncpx{n}_d}$ and $\rk G_{n,d}=\rk \widetilde{G}_{n,d}=\binom{n-1+d}{d}$.
\end{proof}

Now equipped with Proposition \ref{p:G}, we are ready to establish Theorem \ref{thm:main}(i).

\begin{prop}\label{p:middle}
Let $n,d\in\N$. In $\ncpx{n}$, we have
\begin{equation}\label{e:middle}
H_{2d}(x_1,\dots,x_n)=\vec{w}^*\, G_{n,d}\, \vec{w},
\end{equation}
where $\vec{w}$ is the column vector of words in $\ncx{n}_{d}$ ordered lexicographically. The noncommutative polynomial $H_{2d}(x_1,\dots,x_n)$ is a sum of $\binom{n-1+d}{d}$ hermitian squares in $\ncpx{n}$, and this number of hermitian squares is minimal.
\end{prop}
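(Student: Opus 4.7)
The plan is to (a) verify the Gram-matrix identity \eqref{e:middle} by direct expansion, (b) extract the SOHS decomposition from Proposition \ref{p:G}(ii), and (c) prove minimality via uniqueness of the Gram representation.

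For step (a), the key observation is that the map $(u,v)\mapsto u^*v$ is a bijection from $\ncx{n}_d\times\ncx{n}_d$ onto $\ncx{n}_{2d}$: writing $u=x_{i_1}\cdots x_{i_d}$, the first $d$ letters of $u^*v$ recover $u^*$ (hence $u$) and the last $d$ letters recover $v$. Using this bijection, the definition of $G_{n,d}$, and the identity $(\alpha^{-1}\circ\pi)(W)=\alpha^{-1}(\alpha(W))$ on words,
\[
\vec{w}^*\,G_{n,d}\,\vec{w}=\sum_{u,v\in\ncx{n}_d}\frac{u^*v}{|(\alpha^{-1}\circ\pi)(u^*v)|}=\sum_{W\in\ncx{n}_{2d}}\frac{W}{|\alpha^{-1}(\alpha(W))|},
\]
which I would check matches the expansion of $H_{2d}=\sigma(h_{2d})$ obtained directly from \eqref{eq:Hd} and the definition of $\sigma$. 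For step (b), Proposition \ref{p:G}(ii) says $G_{n,d}$ is positive semidefinite of rank $r:=\binom{n-1+d}{d}$, so a real factorization $G_{n,d}=L^TL$ with $L\in\R^{r\times|\ncx{n}_d|}$ produces $H_{2d}=(L\vec{w})^*(L\vec{w})=\sum_{i=1}^r p_i^*p_i$ for $p_i:=[L\vec{w}]_i\in\ncpx{n}_d$, an SOHS with $r$ squares.

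For step (c), suppose $H_{2d}=\sum_{i=1}^N p_i^*p_i$ with $p_i\in\ncpx{n}$. I would first reduce to the case that each $p_i$ is homogeneous of degree $d$. Let $D=\max_i\deg p_i$ and let $p_i^{(j)}$ denote the degree-$j$ component of $p_i$. The degree-$2D$ component of $\sum_i p_i^*p_i$ equals $\sum_i(p_i^{(D)})^*p_i^{(D)}$, and a vanishing sum of hermitian squares in $\ncpx{n}$ forces each summand to be zero (by evaluating at generic tuples of hermitian matrices), so comparing with $H_{2d}$ yields $D=d$; replacing each $p_i$ by $p_i^{(d)}$ and equating degree-$2d$ parts still gives $H_{2d}$. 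Writing each $p_i^{(d)}=\vec{c}_i\cdot\vec{w}$ then yields $H_{2d}=\vec{w}^*C\,\vec{w}$ with $C:=\sum_i\vec{c}_i^T\vec{c}_i$ positive semidefinite of rank at most $N$. The bijection from step (a) makes $M\mapsto\vec{w}^*M\vec{w}$ injective on matrices indexed by $\ncx{n}_d$, so $C=G_{n,d}$ and hence $N\ge\rk G_{n,d}=r$. I expect the most delicate point to be the reduction to homogeneous summands, which crucially relies on the rigidity of the free $*$-algebra; everything else is bookkeeping around the bijection.
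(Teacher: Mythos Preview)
Your proof is correct and follows essentially the same approach as the paper's: establish the Gram identity via the bijection $(u,v)\mapsto u^*v$ on $\ncx{n}_d\times\ncx{n}_d\to\ncx{n}_{2d}$, factor the positive semidefinite $G_{n,d}$ to obtain the SOHS, and use uniqueness of the Gram matrix for minimality. The paper compresses your step~(c) into the single assertion that homogeneity of $H_{2d}$ forces each $q_j\in\ncpx{n}_d$, whereas you spell out the top-degree reduction explicitly; both arguments are valid, and your more detailed treatment of this point is not a different route but simply a fuller justification of what the paper takes as standard.
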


\begin{proof}
Each element in $\ncx{n}_{2d}$ can be uniquely written as $u^*v$ for some $u,v\in\ncx{n}_{d}$. Thus, every (hermitian) $f\in\ncpx{n}_{2d}$ can be written as $f=\vec{w}^* A\vec{w}$ for a unique (symmetric) real matrix $A$; namely, the coefficient of $u^*v$ in $f$ is the $(u,v)$ entry of $A$.
By \eqref{eq:Hd}, the coefficient of $u^*v$ in $H_{2d}(x_1,\dots,x_n)$ is $\frac{1}{|\alpha^{-1}(\pi(u^*v))|}$. Thus, \eqref{e:middle} holds by the definition of $G_{n,d}$.
Since $G_{n,d}$ is positive semidefinite, it factors as $G_{n,d}=S^*S$, where $S$ has $\rk G_{n,d} = \binom{n-1+d}{d}$ rows. Hence, $H_{2d}(x_1,\dots,x_n)=(S\vec{w})^* (S\vec{w})$ is a sum of $\binom{n-1+d}{d}$ hermitian squares.
To see that this number of hermitian squares is minimal, suppose $H_{2d}(x_1,\dots,x_n)=\sum_{j=1}^r q_j^*q_j$ for $q_j\in \ncpx{n}$. The homogeneity of $H_{2d}(x_1,\dots,x_n)$ ensures that $q_j\in\ncpx{n}_{d}$. Expanding $q_j$ along the basis $\ncx{n}_{d}$ gives $H_{2d}(x_1,\dots,x_n)=\vec{w}^* Q^*Q \vec{w}$ for a matrix $Q$ with $r$ rows. By the uniqueness observation from the start of the proof, we have $Q^*Q= G_{n,d}$, and thus $r\ge\binom{n-1+d}{d}$.
\end{proof}

\begin{rem}
One can write $G_{n,d}=S^* \Lambda S$ with $\Lambda =\big[ \begin{smallmatrix} \lambda_1 & \\[-8pt] & \!\!\ddots \end{smallmatrix}\big]$ diagonal using only linear operations over $\Q$ \cite[Corollary I.2.4]{lam}. This gives rise to a weighted SOHS representation of $H_{2d}(x_1,\dots,x_n)$ with rational coefficients: if the columns of $S$ are indexed by $\ncx{n}_d$, then
\begin{equation*}
H_{2d}(x_1,\dots,x_n)=\sum_j \lambda_{j}\, s_j^*s_j,\qquad s_j=\sum_{w\in\ncx{n}_{d}} S_{j,w}\, w.
\end{equation*}
\end{rem}

\begin{exa}
Let $n=2$ and $d=1$.  Then $\vec{w} = \big[ \begin{smallmatrix} x_1 \\ x_2 \end{smallmatrix}\big]$ and $G_{2,1}=\big[ \begin{smallmatrix} 1 & \frac{1}{2} \\ \frac{1}{2} & 1 \end{smallmatrix} \big] = S^* \Lambda S$, in which $S = \big[\begin{smallmatrix}1 & \frac{1}{2} \\ 0 & 1 \end{smallmatrix}\big]$ and $\Lambda = \operatorname{diag}(1,\frac{3}{4})$, so $S \vec{w} = \big[ \begin{smallmatrix} x_1 + \frac{1}{2}x_2\\ x_2 \end{smallmatrix}\big]$ and hence
\begin{equation*}
H_2(x_1,x_2)
= (x_1 + \tfrac{1}{2}x_2)^*(x_1 + \tfrac{1}{2}x_2) + \tfrac{3}{4}x_2^*x_2,
\end{equation*}
which reduces to \eqref{e:H2}.  If $x_1$ and $x_2$ commute, we recover $h_2(x_1,x_2) = x_1^2 + x_1 x_2 + x_2^2$.
\end{exa}

\begin{exa}
Let $n=2$ and $d=2$.  Then $G_{2,2} = S^* \Lambda S$, in which
\begin{equation}\label{e:G22}
    G_{2,2}
    =
    \begin{bmatrix}
        1 & \frac{1}{4} & \frac{1}{4} & \frac{1}{6} \\[2pt]
        \frac{1}{4} & \frac{1}{6} & \frac{1}{6} & \frac{1}{4}\\[2pt]
        \frac{1}{4} & \frac{1}{6} & \frac{1}{6} & \frac{1}{4}\\[2pt]
        \frac{1}{6} & \frac{1}{4} & \frac{1}{4} & 1
    \end{bmatrix},
    \quad
    S = 
    \begin{bmatrix}
 1 & \frac{1}{4} & \frac{1}{4} & \frac{1}{6} \\[2pt]
 \0 & \0 & \0 & 1 \\
\0 & 1 & 1 & 2 \\
  \0 & \0 & \0 & 1 \\
    \end{bmatrix},
    \quad \text{and} \quad
    \Lambda = 
    \begin{bmatrix}
 1 & \0 & \0 & \0 \\
 \0 & \frac{5}{9} & \0 & \0 \\
 \0 & \0 & \frac{5}{48} & \0 \\
 \0 & \0 & \0 & 0 \\
    \end{bmatrix}.    
\end{equation}
Thus, $H_4(x_1,x_2) = s_1^*s_1 + \frac{5}{9}s_2^*s_2 + \frac{5}{48} s_3^* s_3$, in which $s_1= x_1^2 + \frac{1}{4} x_1 x_2 + \frac{1}{4} x_2 x_1 + \frac{1}{6} x_2^2$, $s_2 = x_2^2$, and $s_3 = x_1 x_2 + x_2 x_1 + 2 x_2^2$.
If $x_1$ and $x_2$ commute, we recover 
$h_4(x_1,x_2) = x_1^4+ x_1^3 x_2 + x_1^2 x_2^2 + x_1 x_2^3 + x_2^4$.
\end{exa}

There is a reason we focus on NCHS polynomials and a noncommutative version of Hunter's theorem, instead of analogues for more general Schur polynomials:

\begin{rem}\label{r:noSchur}
In the commutative landscape, nonnegativity of CHS polynomials can be viewed as a special case of nonnegativity of Schur polynomials for even partitions.
To every partition $\lambda\vdash d$ with $n$ nonnegative parts one assigns the Schur polynomial $s_\lambda(x_1,\dots,x_n)$ (see \cite[Section 7.10]{StanleyBook2} for the definition). Then, $h_d(x_1,\dots,x_n)=s_{(d,0,\dots,0)}(x_1,\dots,x_n)$. In general, if all the parts of $\lambda$ are even, the polynomial $s_\lambda(x_1,\dots,x_n)$ is nonnegative on $\R^n$ (see Speyer's response \cite{Tao} using the bialternant formula in terms of Vandermonde-like determinants \cite[Section 7.15]{StanleyBook2} and Descartes' rule of signs). However, while the noncommutative polynomial $\sigma(h_{2d}(x_1,\dots,x_n))$ is globally positive semidefinite by Proposition \ref{p:middle}, this is no longer true for $\sigma(s_\lambda (x_1,\dots,x_n))$ with a general even $\lambda$.

For example, $s_{(2,2)}(x_1,x_2)=x_1^2 x_2^2$, and thus
$$\sigma\left(s_{(2,2)}(x_1,x_2)\right)=\frac16\left(
x_1^2x_2^2+x_1x_2x_1x_2+x_1x_2^2x_1+x_2x_1^2x_2+x_2x_1x_2x_1+x_2^2x_1^2
\right).$$
This noncommutative polynomial evaluates to $\frac16[\begin{smallmatrix}1&2\\2&3\end{smallmatrix}]\not\succeq0$ at the pair of hermitian matrices $[\begin{smallmatrix}0&0\\0&1\end{smallmatrix}],[\begin{smallmatrix}2&1\\1&0\end{smallmatrix}]$. 
This agrees with the heuristic that fully symmetrized noncommutative lifts of nonnegative polynomials are usually not positive semidefinite, and this somewhat distinguishes CHS polynomials from general Schur polynomials.
\end{rem}

\section{Sharp lower bound}\label{Section:LowerBound}

In order to establish an exact quantitative version of positivity for NCHS polynomials, we require two further matrices.
For $n,d\in\N$, let $\widetilde{M}_{n,d}$ and $M_{n,d}$ be square matrices indexed by $\cx{n}_d$ and $\ncx{n}_d$, respectively, whose $(x_j^d,x_j^d)$ entries for $j=1,2,\dots,n$ are 1, and all the other entries are 0.
Both $\widetilde{M}_{n,d}$ and $M_{n,d}$ are diagonal projections of rank $n$. 
The next lemma indicates the role of $M_{n,d}$ in estimating the NCHS polynomials.

\begin{lem}\label{l:geneig}
Let $n,d\in\N$ and $\mu\in \R$. The following are equivalent.
\begin{enumerate}[leftmargin=*]
\item $H_{2d}(X_1,\dots,X_n)\succeq\mu (X_1^{2d}+\cdots+X_n^{2d})$ for all hermitian operators $X_1,\dots,X_n$ on a Hilbert space.
\item $H_{2d}(X_1,\dots,X_n)\succeq\mu (X_1^{2d}+\cdots+X_n^{2d})$ for all symmetric $K\times K$ matrices $X_1,\dots,X_n$, where $K=\frac{n^{d+1}-1}{n-1}$.
\item $G_{n,d}-\mu M_{n,d}$ is positive semidefinite.
\end{enumerate}
\end{lem}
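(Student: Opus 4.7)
My plan is to separate the three implications. The easy directions are $(iii)\Rightarrow(i)\Rightarrow(ii)$; the substance is in $(ii)\Rightarrow(iii)$, which I would establish by contrapositive using a separating linear functional together with the noncommutative positivstellensatz.

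For $(iii)\Rightarrow(i)$, I would first observe that $\sum_{j=1}^n X_j^{2d}=\vec{w}(X)^* M_{n,d}\vec{w}(X)$ for any hermitian $X_j$, directly from the definition of $M_{n,d}$. Combined with Proposition~\ref{p:middle}, this gives
\[
H_{2d}(X)-\mu\sum_{j=1}^n X_j^{2d} = \vec{w}(X)^*(G_{n,d}-\mu M_{n,d})\vec{w}(X).
\]
Under $(iii)$, factoring $G_{n,d}-\mu M_{n,d}=B^T B$ turns the right-hand side into $(B\vec{w}(X))^*(B\vec{w}(X))$, a sum of hermitian squares and hence $\succeq 0$ for any hermitian tuple. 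Implication $(i)\Rightarrow(ii)$ is immediate by specialization to symmetric $K\times K$ matrices.

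For $(ii)\Rightarrow(iii)$, assume there is $\vec{z}\in\R^{\ncx{n}_d}$ with $\vec{z}^T(G_{n,d}-\mu M_{n,d})\vec{z}<0$. Since every monomial of degree $2d$ factors uniquely as $u^* v$ with $u,v\in\ncx{n}_d$, the rule $\phi(u^* v):=\vec{z}_u\vec{z}_v$ defines a linear functional $\phi$ on $\ncpx{n}_{2d}$. A direct computation gives $\phi(q^* q)=\bigl(\sum_v a_v\vec{z}_v\bigr)^2\ge 0$ for every $q=\sum_v a_v v\in\ncpx{n}_d$, so $\phi$ is nonnegative on sums of hermitian squares; meanwhile, Proposition~\ref{p:middle} together with the shape of $M_{n,d}$ gives $\phi\bigl(H_{2d}-\mu\sum_j x_j^{2d}\bigr)=\vec{z}^T(G_{n,d}-\mu M_{n,d})\vec{z}<0$. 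Consequently $H_{2d}-\mu\sum_j x_j^{2d}$ cannot be a sum of hermitian squares.

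To finish, I would invoke the noncommutative positivstellensatz for hermitian homogeneous polynomials of degree $2d$: such a polynomial is a sum of hermitian squares if and only if it evaluates to a positive semidefinite operator on every tuple of symmetric matrices of size $K=\dim\ncpx{n}_{\le d}$. Applied here, this yields a symmetric $K\times K$ tuple at which $H_{2d}-\mu\sum_j x_j^{2d}$ is not $\succeq 0$, contradicting $(ii)$. The main obstacle is the appeal to this result at the sharp test size $K$. For a self-contained proof, one would Hahn--Banach-extend $\phi$ to a positive functional on $\ncpx{n}_{\le 2d}^{\rm herm}$ and run the GNS construction on the truncated free Fock space $\cH=\R^K$ with basis $\{e_w:w\in\ncx{n}_{\le d}\}$, using the symmetric operators $X_j=L_j+L_j^*$ built from the left-creation maps $L_j e_w = e_{x_j w}$ for $|w|<d$ (and $0$ at the top level); the cyclic image of the unit would then be a vector $\xi\in\cH$ realizing $\langle(H_{2d}(X)-\mu\sum_j X_j^{2d})\xi,\xi\rangle<0$.
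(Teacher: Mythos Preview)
Your argument is correct and in substance identical to the paper's: both rely on the noncommutative SOHS positivstellensatz (the paper cites McCullough--Putinar) for the nontrivial implication $(ii)\Rightarrow(iii)$. The paper is slightly more direct there: once $(ii)$ forces $f_\mu:=H_{2d}-\mu\sum_j x_j^{2d}$ to be an SOHS, one writes $f_\mu=\vec{w}^* P\vec{w}$ with $P\succeq0$, and then the \emph{uniqueness} of the Gram matrix in degree $2d$ (already recorded in the proof of Proposition~\ref{p:middle}: every word of length $2d$ splits uniquely as $u^* v$) forces $P=G_{n,d}-\mu M_{n,d}$. Your separating functional $\phi(u^*v)=z_uz_v$ is a valid way to show $f_\mu$ is not SOHS, but it is just reproving that uniqueness, so the detour is unnecessary.

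One genuine issue is your final ``self-contained'' sketch. The tuple $X_j=L_j+L_j^*$ on the truncated Fock space is a \emph{fixed} tuple, independent of $\phi$; it does not realize $\langle f_\mu(X)\xi,\xi\rangle=\phi(f_\mu)$ for your particular $\phi$. The correct truncated GNS construction builds the semi-inner product $\langle p,q\rangle:=\Phi(p^*q)$ on $\ncpx{n}_{\le d}$ from a positive extension $\Phi$ of $\phi$, quotients by its kernel, and takes $X_j$ to be the compression to the resulting space (of dimension at most $K$) of left multiplication by $x_j$; the witnessing vector is the class of $1$. Since your main argument only \emph{cites} the positivstellensatz, this slip does not affect the proof of the lemma, but the sketch as written would not work.
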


\begin{proof}
Clearly, (i) $\Rightarrow$ (ii).
Let $f_\mu = H_{2d}(x_1,\dots,x_n)- \mu (x_1^{2d}+\cdots+x_n^{2d})$; then
$f_\mu = \vec{w}^* (G_{n,d}-\mu M_{n,d})\vec{w}$, where $\vec{w}$ is the column vector of words in $\ncx{n}_d$. Thus, (iii) $\Rightarrow$ (i). If $f_\mu$ is positive semidefinite on all symmetric $K\times K$ matrices, then it is a sum of hermitian squares in $\ncpx{n}$ \cite[Theorem 1.1 and Remark 1.2]{MP05}. Since $f_\mu$ is homogeneous, the corresponding hermitian squares are homogeneous, so $f_\mu=\vec{w}^* P\vec{w}$ for some $P\succeq0$. By homogeneity, $G_{n,d}-\mu M_{n,d}=P$, so (ii) $\Rightarrow$ (iii).
\end{proof}

For $n,d\in\N$, let us define $\mu_{n,d}$ as the largest $\mu\in\R$ that satisfies the equivalent statements in Lemma \ref{l:geneig},
\begin{equation}\label{e:mu}
\begin{split}
\mu_{n,d}&=\max\left\{\mu\in\R\colon 
H_{2d}(X_1,\dots,X_n)\succeq\mu (X_1^{2d}+\cdots+X_n^{2d})\text{ for all }X_j
\right\}\\
&=\max\{\mu\in\R\colon G_{n,d}-\mu M_{n,d}\succeq0\}.
\end{split}
\end{equation}
The second line in \eqref{e:mu} justifies the use of maximum instead of supremum (also by compactness, since it suffices to restrict to $K\times K$ contractions $X_j$ by Lemma \ref{l:geneig}(ii) and homogeneity).
The number $\mu_{n,d}$, nonnegative by Proposition \ref{p:middle}, is the largest lower bound on $H_{2d}(x_1,\dots,x_n)$ with respect to the noncommutative positive form $x_1^{2d}+\cdots+x_n^{2d}$.
Clearly, $\mu_{1,d}=1$.
As an auxiliary step towards a closed-form expression for $\mu_{n,d}$, we require some partial information about the inverse of $\widetilde G_{n,d}$ (which exists by Proposition \ref{p:G}(i)). Using combinatorial means, we calculate the preimage of $x_1^d$ under $\widetilde G_{n,d}$ as follows.

\begin{prop}\label{p:inverse}
Let $n,d\in\N$. The map $\widetilde{G}_{n,d}^{-1}:\px{n}_d\to\px{n}_d$ sends $x_1^d$ to
\begin{equation}\label{e:inverse}
\frac{\binom{n-1+d}{d}}{\binom{n-1+2d}{2d}} \sum_{i=0}^d (-1)^{d-i}\binom{d}{i}\binom{n-1+d}{i} x_1^i(x_2+\cdots+x_n)^{d-i}.
\end{equation}
\end{prop}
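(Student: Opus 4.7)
The plan is to translate the equation $\widetilde G_{n,d}(p)=x_1^d$ into a functional equation with respect to the inner product $\lambda$ from the proof of Proposition~\ref{p:G} and then evaluate the resulting triple-binomial sum via Saalschutz's theorem. Since $\widetilde G_{n,d}$ is (up to a positive scalar) the Gram matrix of $\cx{n}_d$ under $\lambda$, and the coefficient of $x_1^d$ in any $q\in\px{n}_d$ equals $q(1,0,\dots,0)$, the claim reduces to
\begin{equation*}
\lambda(pm)=\gamma\,\delta_{m,x_1^d}\quad\text{for every monomial }m\in\cx{n}_d,
\end{equation*}
with $\gamma=\sqrt{n}\,(2d)!/(2d+n-1)!$. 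Plugging in the candidate $p$ from \eqref{e:inverse}, expanding $(x_2+\cdots+x_n)^{d-i}$ by the multinomial theorem, and applying \eqref{e:int} termwise (collapsing the inner stars-and-bars sum via $\sum_{a_j\ge0,\,\sum a_j=d-i}\prod_{j\ge2}\binom{a_j+l_j}{l_j}=\binom{2d-i-l_1+n-2}{d-i}$) reduces the problem for $m=x_1^{l_1}\cdots x_n^{l_n}$ to
\begin{equation*}
\lambda(pm)=\frac{\sqrt{n}\,C\,d!\,l_1!\cdots l_n!}{(2d+n-1)!}\,S(l_1),
\end{equation*}
where $S(l):=\sum_{i=0}^d(-1)^{d-i}\binom{i+l}{i}\binom{n-1+d}{i}\binom{2d-i-l+n-2}{d-i}$.

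Evaluating $S(l)$ is the combinatorial core. Substituting $j=d-i$ and computing the term ratio $t_{j+1}/t_j$ identifies $S(l)$ with $\binom{d+l}{l}\binom{n-1+d}{d}\cdot{}_3F_2\bigl(-d,-d,d-l+n-1;\,-(d+l),n;\,1\bigr)$, a Saalschutzian series (the sum of lower parameters exceeds that of the upper by $1$). Saalschutz's theorem then yields
\begin{equation*}
S(l)=\binom{d+l}{l}\binom{n-1+d}{d}\,\frac{(-l)_d\,(-(2d+n-1))_d}{(-(d+l))_d\,(-(d+n-1))_d}.
\end{equation*}
For $l<d$ the Pochhammer $(-l)_d$ vanishes, so $S(l)=0$ and hence $\lambda(pm)=0$ for every $m\ne x_1^d$. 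At $l=d$, a straightforward factorial cancellation gives $S(d)=\binom{n-1+2d}{d}$, and the explicit prefactor $C=\binom{n-1+d}{d}/\binom{n-1+2d}{2d}$ in \eqref{e:inverse} is exactly what makes $\lambda(p\,x_1^d)=\gamma$.

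The main obstacle is the hypergeometric bookkeeping: recasting $S(l)$ in Saalschutzian form (which requires careful sign and Pochhammer manipulation of the term ratio) and verifying that the factorials collapse to $\binom{n-1+2d}{d}$ at $l=d$. A more conceptual alternative is to restrict $p$ to $\Sigma_{n-1}$ (where $x_2+\cdots+x_n=1-x_1$) and identify the restriction as $(-1)^d C\,P_d^{(0,n-1)}(1-2x_1)$, a shifted Jacobi polynomial: the vanishing for $l<d$ is then immediate from the orthogonality of $P_d^{(0,n-1)}$ on $[0,1]$ with weight $(1-t)^{n-1}$ against polynomials of degree $<d$ (after slicing the simplex along $x_1$), while the normalization at $l=d$ reduces to a single beta integral via the Rodrigues formula.
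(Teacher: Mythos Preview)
Your argument is correct. The reduction is essentially the same as the paper's: both translate $\widetilde G_{n,d}(p)=x_1^d$ into the system $\sum_v(\widetilde G_{n,d})_{u,v}c_v=\delta_{u,x_1^d}$, collapse the inner multinomial sum over $k_2,\dots,k_n$ (you via the generating-function identity $\sum_a\binom{a+l}{a}x^a=(1-x)^{-l-1}$, the paper via reflection and the generalized Vandermonde convolution), and land on the same one-variable sum $S(l)$. The evaluation of $S(l)$ is where the routes diverge. The paper stays entirely elementary: it expands $\binom{\ell+k}{k}=\sum_j\binom{\ell}{j}\binom{k}{j}$, interchanges sums, and applies ordinary Vandermonde twice to obtain the closed form $\binom{\ell}{d}\binom{m+d}{d}$ with $m=n-1+d$, from which both the vanishing for $\ell<d$ and the value at $\ell=d$ are immediate. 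You instead recognise $S(l)$ as a Saalschutzian ${}_3F_2$ and invoke the Pfaff--Saalsch\"utz theorem, which is faster but less self-contained. Your Jacobi-polynomial remark is a genuinely different and more conceptual third route: restricting $p$ to $\Sigma_{n-1}$ and slicing along $x_1$ identifies the problem with orthogonality of $P_d^{(0,n-1)}$ against lower-degree polynomials under the weight $(1-t)^{n-1}$ on $[0,1]$, which explains structurally why the vanishing occurs and reduces the normalisation to a single beta integral. Each approach has its merits---the paper's is fully elementary, yours is shorter once Saalsch\"utz is granted, and the Jacobi route gives the cleanest conceptual picture.
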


\begin{proof}
Write the polynomial above as $\sum_{v\in\cx{n}_d}c_v v$.
We want to show that
\begin{equation*}
\sum_{v\in\cx{n}_d} \big(\widetilde G_{n,d})_{u,v}\cdot c_v
=
\begin{cases}
1 & \text{if $u=x_1^d$},\\
0 & \text{otherwise}.
\end{cases}
\end{equation*}
That is, for nonnegative integers $\ell_1,\dots,\ell_n$ with $\ell_1+\cdots+\ell_n=d$, we claim that
\begin{equation}\label{e:todo}
\sum_{k_1+\cdots+k_n=d} 
(-1)^{d-k_1}\binom{d}{k_1}\binom{n-1+d}{k_1}\frac{(d-k_1)!}{k_2!\cdots k_n!} 
\frac{(\ell_1+k_1)!\cdots (\ell_n+k_n)!}{(2d)!}
\end{equation}
equals $\frac{\binom{n-1+2d}{2d}}{\binom{n-1+d}{d}}$ if $\ell_1=d$ and $0$ otherwise.
We can rewrite \eqref{e:todo} as
\begin{align*}
&\frac{d!}{(2d)!}\sum_{k_1+\cdots+k_n=d} 
(-1)^{d-k_1}\binom{n-1+d}{k_1}\frac{(\ell_1+k_1)!\cdots (\ell_n+k_n)!}{k_1!\cdots k_n!} \\
&\quad= \frac{d!}{(2d)!}\sum_{k_1=0}^d 
(-1)^{d-k_1}\binom{n-1+d}{k_1}\frac{(\ell_1+k_1)!}{k_1!} \!\!\!\!\!\! \sum_{k_2+\cdots+k_n=d-k_1} \!\!\!\!\!\! \frac{(\ell_2+k_2)!\cdots (\ell_n+k_n)!}{k_2!\cdots k_n!}.
\end{align*}
By the binomial coefficient reflection $\binom{\ell+k}{k}=(-1)^k\binom{-\ell-1}{k}$ \cite[(5.14) on page 164]{concrete} and generalized Vandermonde's convolution \cite[(5.27) on page 170 and Exercise 5.62 on page 248]{concrete},
\begin{align*}
&\sum_{k_2+\cdots+k_n=d-k_1}\frac{(\ell_2+k_2)!\cdots (\ell_n+k_n)!}{k_2!\cdots k_n!}\\
&\quad =\ell_2!\cdots\ell_n!\sum_{k_2+\cdots+k_n=d-k_1}\binom{\ell_2+k_2}{k_2}\cdots \binom{\ell_n+k_n}{k_n}\\
&\quad=\ell_2!\cdots\ell_n!(-1)^{d-k_1}\sum_{k_2+\cdots+k_n=d-k_1}\binom{-\ell_2-1}{k_2}\cdots \binom{-\ell_n-1}{k_n}\\
&\quad =\ell_2!\cdots\ell_n!(-1)^{d-k_1}\binom{-\ell_2-\cdots-\ell_n-(n-1)}{d-k_1}.
\end{align*}
Thus, \eqref{e:todo} becomes
\begin{align*}
&\frac{d!}{(2d)!}\sum_{k_1=0}^d 
(-1)^{d-k_1}\binom{n-1+d}{k_1}\frac{(\ell_1+k_1)!}{k_1!}\ell_2!\cdots\ell_n!(-1)^{d-k_1}\binom{-\ell_2-\cdots-\ell_n-(n-1)}{d-k_1}  \\
&\quad =  \frac{d! \ell_1!\cdots\ell_n!}{(2d)!}\sum_{k_1=0}^d 
\binom{n-1+d}{k_1}\binom{\ell_1+k_1}{k_1}\binom{\ell_1-d-n+1}{d-k_1}.
\end{align*}
Setting $m=n-1+d$, it suffices to prove
\begin{equation}\label{e:todo1}
\sum_{k=0}^d \binom{m}{k}\binom{\ell+k}{k}\binom{\ell-m}{d-k}
=
\begin{cases}
\binom{m+d}{d} & \text{if $\ell=d$},\\
0 & \text{if $\ell<d$}.    
\end{cases}
\end{equation}
This is a consequence of the following calculation, valid for a general $\ell\in\Z$, 
\begin{align*}
&\sum_{k=0}^d \binom{m}{k}\binom{\ell-m}{d-k}\binom{\ell+k}{k}
=\sum_{k=0}^d \binom{m}{k}\binom{\ell-m}{d-k}\sum_{j=0}^d\binom{\ell}{j}\binom{k}{j}\\
=\,&\sum_{j=0}^d\sum_{k=j}^d \binom{\ell-m}{d-k}\binom{\ell}{j}\binom{m}{k}\binom{k}{j}
=\sum_{j=0}^d\sum_{k=j}^d \binom{\ell-m}{d-k}\binom{\ell}{j}\binom{m}{j}\binom{m-j}{k-j}\\
=\,&\sum_{j=0}^d \binom{\ell}{j}\binom{m}{j}\sum_{k=j}^d\binom{\ell-m}{d-k}\binom{m-j}{k-j}
=\sum_{j=0}^d \binom{\ell}{j}\binom{m}{j}\sum_{i=0}^{d-j}\binom{\ell-m}{d-j-i}\binom{m-j}{i}\\
=\,&\sum_{j=0}^d \binom{m}{j}\binom{\ell}{j}\binom{\ell-j}{d-j}
=\sum_{j=0}^d \binom{m}{j}\binom{\ell}{d}\binom{d}{j}=\binom{\ell}{d}\binom{m+d}{d},
\end{align*}
where we used Vandermonde's convolution \cite[(5.27) on page 170]{concrete} twice.
\end{proof}

Since $\mu_{n,d}$ is the optimal lower bound for $H_{2d}(x_1,\dots,x_n)$ by the definition \eqref{e:mu}, the following derivation of a closed formula for $\mu_{n,d}$ completes the proof of Theorem \ref{thm:main}(ii).

\begin{prop}\label{p:bound}
Let $n,d\in\N$ with $n\ge2$. Then
\begin{equation*}
\mu_{n,d}= 
\frac{\binom{n-1+2d}{2d}}{\binom{n-1+d}{d}\left(\binom{n-1+d}{d}+\Delta\right)},
    \end{equation*}
where $\Delta=1$ if $d$ is odd and $\Delta=n-1$ if $d$ is even.
\end{prop}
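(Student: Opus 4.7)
The plan is to convert the semidefinite optimization $\mu_{n,d}=\max\{\mu : G_{n,d}-\mu M_{n,d}\succeq 0\}$ from \eqref{e:mu} into a small generalized eigenvalue problem on the commutative side. First, I would observe that both $G_{n,d}$ and $M_{n,d}$ factor through abelianization: if $B$ is the $0/1$ matrix indexed by $\ncx{n}_d\times\cx{n}_d$ with $B_{w,m}=1$ iff $\alpha(w)=m$, then $[G_{n,d}]_{u,v}=[\widetilde{G}_{n,d}]_{\alpha(u),\alpha(v)}$ gives $G_{n,d}=B\widetilde{G}_{n,d}B^T$, and since $|\alpha^{-1}(x_j^d)|=1$ a similar check yields $M_{n,d}=B\widetilde{M}_{n,d}B^T$. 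Because $B^TB$ is diagonal with positive diagonal entries, $B^T$ is surjective; hence $G_{n,d}-\mu M_{n,d}\succeq 0$ if and only if $\widetilde{G}_{n,d}-\mu\widetilde{M}_{n,d}\succeq 0$.

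Next, I would write $\widetilde{M}_{n,d}=EE^T$, where $E$ is the $|\cx{n}_d|\times n$ matrix whose $j$-th column is the standard basis vector indexed by $x_j^d$. Since $\widetilde{G}_{n,d}\succ 0$ by Proposition \ref{p:G}(i), conjugating by $\widetilde{G}_{n,d}^{-1/2}$ (or invoking a Schur complement) reduces the pencil condition to $\mu A\preceq I_n$, where $A:=E^T\widetilde{G}_{n,d}^{-1}E\in\R^{n\times n}$, so $\mu_{n,d}=1/\lambda_{\max}(A)$. The $(i,j)$ entry of $A$ is the coefficient of $x_i^d$ in $\widetilde{G}_{n,d}^{-1}(x_j^d)$, and this is precisely what Proposition \ref{p:inverse} supplies: for $j=1$, only the term $i=d$ in \eqref{e:inverse} contributes to the coefficient of $x_1^d$, while only the term $i=0$ contributes to the coefficient of $x_k^d$ for $k\neq 1$; permutation symmetry of the variables handles the remaining columns. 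Hence $A=c\bigl[\bigl(\binom{n-1+d}{d}-(-1)^d\bigr)I_n+(-1)^d J_n\bigr]$, where $c=\binom{n-1+d}{d}/\binom{n-1+2d}{2d}$ and $J_n$ is the $n\times n$ all-ones matrix.

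Finally, since the eigenvalues of $J_n$ are $n$ (with eigenvector $(1,\dots,1)$) and $0$ (multiplicity $n-1$), the spectrum of $A$ consists of $c\bigl(\binom{n-1+d}{d}+(-1)^d(n-1)\bigr)$ and $c\bigl(\binom{n-1+d}{d}-(-1)^d\bigr)$, with multiplicities $1$ and $n-1$ respectively. For $n\ge 2$, a direct comparison in each parity of $d$ yields $\lambda_{\max}(A)=c\bigl(\binom{n-1+d}{d}+\Delta\bigr)$ with $\Delta$ as in the statement; reciprocating then gives the claimed formula. The only substantive ingredient is Proposition \ref{p:inverse}, which is already in hand, and the main subtlety in the plan is recognizing that the pencil problem collapses onto the $n$-dimensional subspace $\ran E$ on which $M_{n,d}$ is supported, after which only routine linear algebra remains.
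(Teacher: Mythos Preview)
Your proposal is correct and follows essentially the same route as the paper: reduce the pencil $G_{n,d}-\mu M_{n,d}$ to the commutative pencil $\widetilde{G}_{n,d}-\mu\widetilde{M}_{n,d}$, then collapse to the $n\times n$ principal block $A=E^T\widetilde{G}_{n,d}^{-1}E$ of $\widetilde{G}_{n,d}^{-1}$ indexed by $x_1^d,\dots,x_n^d$, read off its entries from Proposition~\ref{p:inverse}, and diagonalize. The only cosmetic differences are that the paper reaches the commutative side via $\ker G_{n,d}\subseteq\ker M_{n,d}$ rather than your factorization $G_{n,d}=B\widetilde{G}_{n,d}B^T$, and it reaches $A$ as the inverse of a Schur complement rather than via conjugation by $\widetilde{G}_{n,d}^{-1/2}$; the resulting $n\times n$ matrix and eigenvalue computation are identical.
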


\begin{proof}
For simplicity, we suppress the subscripts in $G_{n,d},M_{n,d},\widetilde G_{n,d},\widetilde{M}_{n,d}$.
By Lemma \ref{l:geneig},
\begin{equation}\label{e:eig1}
\mu_{n,d}=\max\{\mu\in\R\colon G-\mu M\succeq0\}.
\end{equation}
We first claim that $\ker G\subseteq \ker M$. Let $f\in\ker G$. Then $\pi(f)=0$ by Proposition \ref{p:G}. Since $\pi$ is abelianization, no term of $f$ is a scalar multiple of $x_j^d$. Thus, $f\in\ker M$ by the definition of $M$.

Since $\ker G\subseteq \ker M$, we can consider \eqref{e:eig1} modulo $\ker G= \ker \pi\cap\ncpx{n}_d$. That is, it suffices to work with $\widetilde G$ and $\widetilde{M}$ instead of $G$ and $M$, respectively. Hence, 
\begin{equation}\label{e:eig2}
\mu_{n,d}=\max\{\mu\in\R\colon \widetilde G-\mu \widetilde{M}\succeq0\}.
\end{equation}
Recall that $\widetilde G$ is positive definite by Proposition \ref{p:G} and $\widetilde{M}$ is the diagonal projection onto $\operatorname{span}\{x_1^{d},\dots,x_n^{d}\}$. 
Let us reorder $\cx{n}_d$ so that it starts with $x_1^{d},\dots,x_n^{d}$, and write
\begin{equation*}
\widetilde{G}=\begin{bmatrix} A& B^*\\B& C\end{bmatrix}
\quad \text{and} \quad
\widetilde{M}=\begin{bmatrix} I& 0\\0& 0\end{bmatrix}   
\end{equation*}
with respect to $\{x_1^{d},\dots,x_n^{d}\}$ and $\cx{n}_d\setminus \{x_1^{d},\dots,x_n^{d}\}$.
By \eqref{e:eig2}, 
\begin{align*}
\mu_{n,d}
&=\max\left\{\mu\in\R\colon\begin{bmatrix}
A-\mu I& B^* \\ B&C
\end{bmatrix}\succeq0\right\} \\
&=\max\left\{\mu\in\R\colon (A-B^*C^{-1}B)-\mu I\succeq0\right\},    
\end{align*}
so $\mu_{n,d}$ is the smallest eigenvalue of the Schur complement $A-B^*C^{-1}B$.
Note that $G':=(A-B^*C^{-1}B)^{-1}$ is the principal block of $\widetilde G^{-1}$ indexed by $x_1^d,\dots,x_n^d$.
Since $\widetilde G$ is invariant under permutations of $x_1,\dots,x_n$ in its row and column indices simultaneously, its inverse $\widetilde G^{-1}$ displays the same invariance. Hence, all the off-diagonal entries of $G'$ are the same, and all the diagonal entries of $G'$ are the same. 
By Proposition \ref{p:inverse}, the diagonal and the off-diagonal terms of $G'$ are
\begin{equation}\label{e:rho}
\rho_0:=\frac{\binom{n-1+d}{d}^2}{\binom{n-1+2d}{2d}} \qquad\text{and}\qquad 
\rho_1:=(-1)^d\frac{\binom{n-1+d}{d}}{\binom{n-1+2d}{2d}},
\end{equation}
respectively. Indeed, Proposition \ref{p:inverse} encodes the first column of $\widetilde G^{-1}$, as follows. If $p$ is the polynomial that maps to $x_1^d$ under $\widetilde G$, then $\rho_0$ (the $(1,1)$ entry of $\widetilde G^{-1}$) is the coefficient of $x_1^d$ in $p$, and $\rho_1$ (the $(2,1)$ entry of $\widetilde G^{-1}$) is the coefficient of $x_2^d$ in $p$.
The spectrum of $G'$ is therefore $\{\rho_0+(n-1)\rho_1,\rho_0-\rho_1\}$, so the spectrum of $A-B^*C^{-1}B$ is $\{(\rho_0+(n-1)\rho_1)^{-1},(\rho_0-\rho_1)^{-1}\}$. Thus,
\begin{equation*}
\mu_{n,d}= 
\begin{cases}
\frac{1}{\rho_0+(n-1)\rho_1}& \text{if $d$ is even},\\
\frac{1}{\rho_0-\rho_1}&\text{if $d$ is odd},    
\end{cases}
\end{equation*}
which gives the desired formula.
\end{proof}

In the spirit of Hunter's theorem, here is the best lower bound valid for every $n$.

\begin{cor}
Let $d\in\N$. For all $n\in\N$ and hermitian operators $X_1,\dots,X_n$,
$$H_{2d}(X_1,\dots,X_n)\succeq \frac{1}{\binom{2d}{d}} \left(X_1^{2d}+\cdots+X_n^{2d}\right),$$
and this is the optimal $n$-independent bound.
\end{cor}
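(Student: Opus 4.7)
The plan is to deduce both the inequality and its sharpness from Theorem \ref{thm:main}(ii) and Proposition \ref{p:bound}, using a zero-padding trick that makes $\mu_{n,d}$ monotone in $n$ and then passing to the limit. First I would observe that for hermitian operators $X_1,\dots,X_n$ and any $N\ge n$, setting $X_{n+1}=\cdots=X_N=0$ on the same Hilbert space gives
\begin{equation*}
H_{2d}(X_1,\dots,X_n,0,\dots,0)=H_{2d}(X_1,\dots,X_n)\quad\text{and}\quad \sum_{j=1}^N X_j^{2d}=\sum_{j=1}^n X_j^{2d}.
\end{equation*}
Indeed, each word in $H_{2d}(x_1,\dots,x_N)$ that contains any of $x_{n+1},\dots,x_N$ vanishes under the substitution, and the surviving words (those in $x_1,\dots,x_n$ only) reassemble to $H_{2d}(x_1,\dots,x_n)$, since $\alpha^{-1}(u)$ is the same whether computed in $n$ or in $N$ variables whenever $u\in\cx{n}_{2d}$. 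Applying Theorem \ref{thm:main}(ii) to the $N$-tuple now yields $H_{2d}(X_1,\dots,X_n)\succeq \mu_{N,d}(X_1^{2d}+\cdots+X_n^{2d})$ for every $N\ge n$, and in particular $\mu_{n,d}\ge\mu_{N,d}$, so $(\mu_{n,d})_{n\in\N}$ is non-increasing.

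Next I would compute $\lim_{n\to\infty}\mu_{n,d}$ directly from the closed form in Proposition \ref{p:bound}. The standard asymptotic $\binom{n-1+k}{k}=\tfrac{n^k}{k!}+O(n^{k-1})$ gives $\binom{n-1+2d}{2d}\sim\tfrac{n^{2d}}{(2d)!}$ and $\binom{n-1+d}{d}^2\sim\tfrac{n^{2d}}{(d!)^2}$, while the correction $\Delta\cdot\binom{n-1+d}{d}$ is at most $O(n^{d+1})$ regardless of the parity of $d$, and hence negligible relative to $n^{2d}$ when $d\ge1$. Taking the ratio yields
\begin{equation*}
\lim_{n\to\infty}\mu_{n,d}=\frac{(d!)^2}{(2d)!}=\frac{1}{\binom{2d}{d}}.
\end{equation*}

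Combining the two steps, $\mu_{n,d}\ge\lim_{N\to\infty}\mu_{N,d}=1/\binom{2d}{d}$ for every $n\in\N$, which together with Theorem \ref{thm:main}(ii) delivers the claimed inequality. Optimality of the constant follows because $\mu_{n,d}$ is, by Theorem \ref{thm:main}(ii), already the largest admissible constant in $n$ variables: any $c>1/\binom{2d}{d}$ strictly exceeds $\mu_{n,d}$ for sufficiently large $n$, so the inequality with $c$ in place of $1/\binom{2d}{d}$ must fail for some choice of hermitian matrices. I do not foresee any serious obstacle; the padding argument is standard in free real algebraic geometry and the asymptotic is routine, the only mildly delicate point being to verify that the $\Delta=n-1$ correction in the even-$d$ case remains of lower order than $\binom{n-1+d}{d}^2$, which reduces to the trivial comparison $n^{d+1}=o(n^{2d})$ for $d\ge1$.
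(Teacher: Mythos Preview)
Your argument is correct and follows the same strategy as the paper: establish that $(\mu_{n,d})_n$ is decreasing and compute its limit from the closed form in Proposition~\ref{p:bound}. You actually supply the zero-padding justification for monotonicity that the paper leaves implicit, and your limit computation matches the paper's. One harmless imprecision: the claim $n^{d+1}=o(n^{2d})$ fails at $d=1$, but that case has $\Delta=1$ (since $d$ is odd), so the correction is only $O(n)$ and the limit still holds; indeed $\mu_{n,1}=\tfrac12$ for all $n\ge 2$.
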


\begin{proof}
The sequence of optimal bounds $(\mu_{n,d})_n$ is monotonically decreasing, and the formula in Proposition \ref{p:bound} implies
\begin{equation*}
\lim_{n\to\infty}\mu_{n,d}=\lim_{n\to\infty} \frac{\binom{n+2d}{2d}}{\binom{n+d}{d}^2}
=\frac{(d!)^2}{(2d)!}\lim_{n\to\infty} \frac{(n+2d)^{\underline{2d}}}{\left((n+d)^{\underline{d}}\right)^2}
=\frac{(d!)^2}{(2d)!} = \binom{2d}{d}^{-1},
\end{equation*}
where an underscored exponent denotes a falling factorial.
\end{proof}

\section{Remarks}\label{Section:Remarks}

We conclude the paper with certain subtle aspects of exact lower bounds for NCHS polynomials, the consequences of Theorem \ref{thm:main} for positivity of the classical CHS polynomials, and two open problems.

Proposition \ref{p:bound} gives the sharp lower bound on $H_{2d}(x_1,\dots,x_n)$ in terms of $x_1^{2d}+\cdots+x_n^{2d}$. In particular, since $\mu_{n,d}>0$, it implies that $H_{2d}(x_1,\dots,x_n)$ is a definite form, in the sense that
$$\ker H_{2d}(X_1,\dots,X_n)=\ker X_1\cap\cdots\cap \ker X_n$$
for all hermitian operators $X_j$.
The next lemma illustrates that in the noncommutative context, not all forms are suitable for bounding $H_{2d}(x_1,\dots,x_n)$ from below.

\begin{prop}\label{p:nobound}
Let $n,d\in\N$, with $n\ge 2$ and  $d\ge 3$.
There is no $\beta>0$ such that
$H_{2d}(X_1,\dots,X_n)\succeq\beta (X_1^2+\cdots+X_n^2)^d$ is positive semidefinite for all symmetric $K\times K$ matrices $X_1,\dots,X_n$, where $K=\frac{n^{d+1}-1}{n-1}$.
\end{prop}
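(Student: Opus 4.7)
My plan is to argue by contradiction along the lines of Lemma~\ref{l:geneig}. Suppose some $\beta > 0$ makes
\[
g_\beta := H_{2d}(x_1,\dots,x_n) - \beta(x_1^2 + \cdots + x_n^2)^d
\]
positive semidefinite on all symmetric $K \times K$ matrices. Since $g_\beta$ is a homogeneous hermitian element of $\ncpx{n}_{2d}$, \cite[Theorem 1.1 and Remark 1.2]{MP05}---applied exactly as in the proof of Lemma~\ref{l:geneig}---forces $g_\beta$ to be a sum of hermitian squares of degree-$d$ polynomials, and hence $g_\beta = \vec{w}^*(G_{n,d} - \beta Q_{n,d})\vec{w}$ with $G_{n,d} - \beta Q_{n,d} \succeq 0$, where $Q_{n,d}$ is the Gram matrix of $(x_1^2 + \cdots + x_n^2)^d$ in the basis $\ncx{n}_d$. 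To reach a contradiction, I will exhibit an $f\in\ncpx{n}_d$, viewed via its coefficient vector $c$ in $\ncx{n}_d$, with $c^T G_{n,d}\, c = 0$ but $c^T Q_{n,d}\, c > 0$.

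Next I work out the combinatorial structure of $Q_{n,d}$. Expanding $(x_1^2+\cdots+x_n^2)^d = \sum_{j_1,\dots,j_d} x_{j_1}^2 \cdots x_{j_d}^2$ and splitting each length-$2d$ word uniquely as $u^*v$ with $|u|=|v|=d$ leaves very little freedom: for even $d=2e$, the splitting forces both $u$ and $v$ to be ``pure products'' $x_{i_1}^2 \cdots x_{i_e}^2$ of $e$ squares, so $Q_{n,d}=\chi\chi^T$, where $\chi$ is the $0$-$1$ indicator vector of such words; for odd $d=2e+1$, the splitting forces $u = x_k\cdot(\text{product of }e\text{ squares})$ and $v = x_k\cdot(\text{product of }e\text{ squares})$ sharing a common middle letter $x_k$, so $Q_{n,d} = \sum_{k=1}^n \chi_k\chi_k^T$ is a direct sum of $n$ rank-one blocks. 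In either case, $Q_{n,d}$ is supported on a very restricted set of ``product-of-squares'' words.

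The crux is then to build an $f\in\ncpx{n}_d$ with $\pi(f)=0$---hence in $\ker G_{n,d}$ by Proposition~\ref{p:G}(ii)---whose coefficient vector $c$ satisfies $c^T Q_{n,d}\, c > 0$. For even $d=2e\geq 4$, I take $f = x_1^2 x_2^{2e-2} - x_1 x_2^{2e-2} x_1$: both words share the abelianization $x_1^2 x_2^{2e-2}$, the first is a product of $e$ squares ($x_1^2$ followed by $e-1$ copies of $x_2^2$), while the second begins with the mismatched pair $x_1 x_2$ and hence is not a product of squares, so $\chi^T c = 1$ and $c^T Q_{n,d}\, c = 1$. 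For odd $d=2e+1\geq 3$, I take $f = x_2 x_1^{2e} - x_1 x_2 x_1^{2e-1}$: again $\pi(f)=0$, the first word is $x_2\cdot (x_1^2)^e$ and lies in the good set of the $k=2$ block, while the second starts with $x_1 x_2$ and is not of the required form; only the $k=2$ block contributes, yielding $c^T Q_{n,d}\, c = 1$. In either case $c^T(G_{n,d} - \beta Q_{n,d})\, c = -\beta < 0$, contradicting $G_{n,d} - \beta Q_{n,d}\succeq 0$.

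The main obstacle will be keeping the word-splitting bookkeeping straight and certifying that the second word of each $f$ is really not of the required ``product of squares'' shape. This is exactly where the hypothesis $d\geq 3$ enters: for $d=2$ each abelianization class of length two is either a singleton $\{x_j^2\}$ (leaving no room to alter a pure square while staying in $\ker\pi$) or a two-element set $\{x_ix_j,x_jx_i\}$ with $i\neq j$ on which $Q_{n,2}$ vanishes, so no analogous $f$ exists and the argument sharply terminates at the endpoint excluded from the statement.
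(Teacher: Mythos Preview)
Your proof is correct and follows the paper's approach: reduce via \cite[Theorem 1.1 and Remark 1.2]{MP05} to showing $G_{n,d}-\beta B_{n,d}\not\succeq 0$, then exhibit $f\in\ker G_{n,d}\setminus\ker B_{n,d}$ (where the paper writes $B_{n,d}$ for your $Q_{n,d}$). The only difference is cosmetic: rather than splitting by parity of $d$, the paper uses the single witness $f = x_1^{d-3}(x_1 x_2^2 - x_2^2 x_1)$, which works uniformly for all $d\ge 3$ and avoids the case analysis.
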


\begin{proof}
Let $\ncxsq{n}_d\subset \ncx{n}_{2d}$ denote the set of words in $x_1^2,\dots,x_n^2$. Observe that
\begin{equation}\label{e:power}
(x_1^2+\cdots+x_n^2)^d = \sum_{w\in\ncxsq{n}_d} w.
\end{equation}
Let $B_{n,d}$ be the square matrix indexed by $\ncx{n}_d$ and defined by
\begin{equation*}
(B_{n,d})_{u,v} =
\begin{cases}
1 & \text{if $u^*v\in \ncxsq{n}_d$}, \\
0 & \text{otherwise}.    
\end{cases}
\end{equation*}
Then $B_{n,d}$ is positive semidefinite; as with $G_{n,d}$, we may view it as a linear map $\ncpx{n}_d\to\ncpx{n}_d$.

Let $\beta>0$. Then $H_{2d}(x_1,\dots,x_n)-\beta (x_1^2+\cdots+x_n^2)^d=\vec{w}^* (G_{n,d}-\beta B_{n,d})\vec{w}$ by \eqref{e:power}, where $\vec{w}$ is the column vector of words in $\ncx{n}_d$. As in the proof of Lemma \ref{l:geneig} we see using \cite[Theorem 1.1 and Remark 1.2]{MP05} that $H_{2d}(x_1,\dots,x_n)-\beta (x_1^2+\cdots+x_n^2)^d$ is positive semidefinite on all $n$-tuples of symmetric $K\times K$ matrices if and only if $G_{n,d}-\beta B_{n,d}\succeq 0$. Now let $f=x_1^{d-3}(x_1x_2^2-x_2^2x_1)\in\ncpx{n}_d$. Then $\pi(f)=0$ and so $f\in\ker G_{n,d}$ by Proposition \ref{p:G}, yet $f\notin\ker B_{n,d}$. Hence, $G_{n,d}-\beta B_{n,d} \not \succeq 0$.
\end{proof}

Next, we specialize Theorem \ref{thm:main} to scalar variables, which gives rise to a new lower bound for classical CHS polynomials.

\begin{cor}\label{c:hunter2}
Let $d,n\in\N$. For every $\vec{x}\in\R^n$,
\begin{equation}\label{e:HunterImproved}
h_{2d}(\vec{x})\geq \frac{\mu_{n,d}}{n^{d-1}} \| \vec{x}\|_2^{2d}.
\end{equation}
\end{cor}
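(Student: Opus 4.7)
The plan is to obtain Corollary \ref{c:hunter2} as a direct specialization of Theorem \ref{thm:main}(ii) to scalars, followed by a standard power mean estimate.

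First I would evaluate the noncommutative operator inequality in Theorem \ref{thm:main}(ii) at $1\times 1$ hermitian operators, i.e., at real numbers $x_1,\dots,x_n$. Since the scalars commute, every word appearing in $H_{2d}$ collapses to its abelianization, and using $\pi\circ\sigma=\id_{\px{n}}$ one sees that the evaluation of $H_{2d}(x_1,\dots,x_n)$ at a point $\vec{x}\in\R^n$ coincides with $h_{2d}(\vec{x})$. Theorem \ref{thm:main}(ii) therefore yields
\begin{equation*}
h_{2d}(\vec{x})\;\ge\;\mu_{n,d}\,\bigl(x_1^{2d}+\cdots+x_n^{2d}\bigr)
\end{equation*}
for every $\vec{x}\in\R^n$.

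Next I would bound the right-hand side below by a multiple of $\|\vec{x}\|_2^{2d}$. Since the map $t\mapsto t^d$ is convex on $[0,\infty)$ for $d\ge 1$, Jensen's inequality applied to the nonnegative numbers $x_1^2,\dots,x_n^2$ gives
\begin{equation*}
\frac{x_1^{2d}+\cdots+x_n^{2d}}{n}\;\ge\;\left(\frac{x_1^2+\cdots+x_n^2}{n}\right)^{\!d},
\end{equation*}
so $x_1^{2d}+\cdots+x_n^{2d}\ge n^{1-d}\|\vec{x}\|_2^{2d}$. Combining the two inequalities yields \eqref{e:HunterImproved}.

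There is essentially no obstacle here; the only point worth double-checking is the trivial fact that $H_{2d}$ reduces to $h_{2d}$ upon commuting the variables, which is built into the definition \eqref{eq:Hd} via $\pi\circ\sigma=\id_{\px{n}}$. The case $n=1$ is automatic since $\mu_{1,d}=1$ and $h_{2d}(x_1)=x_1^{2d}$.
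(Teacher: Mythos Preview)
Your proof is correct and follows essentially the same approach as the paper: specialize Theorem~\ref{thm:main}(ii) to scalars to get $h_{2d}(\vec{x})\ge \mu_{n,d}\|\vec{x}\|_{2d}^{2d}$, then use the standard $\ell^2$--$\ell^{2d}$ norm comparison $\|\vec{x}\|_2^{2d}\le n^{d-1}\|\vec{x}\|_{2d}^{2d}$ (which you derive via Jensen). The paper states the second step as a norm inequality without spelling out Jensen, but the content is identical.
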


\begin{proof}
On one hand, $h_{2d}(\vec{x})\ge \mu_{n,d}\| \vec{x}\|_{2d}^{2d}$ by Theorem \ref{thm:main}. On the other hand, $\|\vec{x}\|_2^{2d} \le n^{d-1}\|\vec{x}\|_{2d}^{2d}$ by a standard estimate between $\ell^2$ and $\ell^{2d}$ norms on $\R^n$.
\end{proof}

\begin{rem}
Let us compare the estimate from Corollary \ref{c:hunter2} with the estimate in Hunter's theorem $h_{2d}(\vec{x})\ge \frac{1}{2^d d!}\| \vec{x}\|_2^{2d}$.
For a fixed $n$, the factor $\frac{\mu_{n,d}}{n^{d-1}}$ decays exponentially in $d$ (since the binomial coefficients in $\mu_{n,d}$ are polynomials in $d$), while the factor $\frac{1}{2^d d!}$ decays factorially in $d$. Thus, 
$$\frac{\mu_{n,d}}{n^{d-1}}\gg \frac{1}{2^d d!}\qquad \text{when}\quad d\gg n,$$
so as $d\to\infty$, our lower bound on $h_{2d}(x_1,\dots,x_n)$ is ultimately better than the original lower bound provided by Hunter's theorem.
\end{rem}

When $d=1$, the identity $H_2(x_1,\dots,x_n)-\frac12(x_1^2+\cdots+x_n^2)=\frac12 (x_1+\cdots+x_n)^2$ shows that $\mu_{n,1}$ is also the optimal bound for $h_2(x_1,\dots,x_n)$, and tightness is attained on the hyperplane $x_1+\cdots+x_n=0$. Analogous conclusions fail when $d=2$, as shown by the following example.

\begin{exa}\label{exa22}
We demonstrate explicitly some subtleties of $\mu_{n,d}$ in the case $n=d=2$.
First, while the constants $\mu_{n,d}$ are optimal for NCHS polynomials, they are not tight for CHS polynomials.
Concretely, Lagrange multipliers show that
$$\min_{x_1^4+x_2^4=1}h_4(x_1,x_2)=\frac12,$$
so $h_4(x_1,x_2)\ge \frac12 (x_1^4+x_2^4)$ for all $(x_1,x_2)\in\R^2$, and $\frac12$ is optimal. 

Secondly, since $\mu_{2,2}=\frac{5}{12}<\frac12$, Proposition \ref{p:bound} establishes that $H_4(X_1,X_2)\succeq \frac{5}{12} (X_1^4+X_2^4)$ for all hermitian operators $X_1,X_2$, and $\frac{5}{12}$ is optimal. To see the last assertion in a more direct way, consider the one-parametric family of matrix pairs
\begin{equation*}
X_1(t)=\phi t\begin{bmatrix}
1 & 1\\ 1& \frac{\sqrt{2t^{-4}-61}-3}{2\phi^2}
\end{bmatrix}
\quad \text{and} \quad
X_2(t)=-\phi^{-1}t \begin{bmatrix}
1 & 1\\ 1& \frac{\sqrt{2t^{-4}-61}-3}{2\phi^{-2}}
\end{bmatrix}
\end{equation*}
for $t\in(0,\sqrt[4]{\frac{2}{61}})$, where $\phi=\frac{1+\sqrt{5}}{2}$. The $(1,1)$ entry of $X_1(t)^4+X_2(t)^4$ is $1$, and the $(1,1)$ entry of $h_4(X_1(t),X_2(t))$ is $\frac{5}{12}(1+25t^4)$. Thus, the $(1,1)$ entry of
$$h_4\big(X_1(t),X_2(t)\big)-\tfrac{5}{12} \left(X_1(t)^4+X_2(t)^4\right)$$
goes to $0$ as $t\to 0$, demonstrating that there is no better bound than $\frac{5}{12}$.

The reason behind demonstrating optimality of $\frac{5}{12}$ with a family of matrix pairs instead of a single matrix pair is the following claim: for every pair of symmetric $k\times k$ matrices $X_1,X_2$ there exists $\ve>0$ such that $H_4(X_1,X_2)\succeq (\frac{5}{12}+\ve) (X_1^4+X_2^4)$. That is, for each pair there exists a better bound than $\frac{5}{12}$, but not a better one that would hold for all pairs; note that this does not contradict the fact that $\mu_{2,2}$ is the maximum (not just the supremum) as in \eqref{e:mu}.
To prove the above claim, we first observe that
\begin{equation}\label{e:ker}
\ker \left(H_4(X_1,X_2)-\frac{5}{12}(X_1^4+X_2^4)\right) = \ker X_1\cap \ker X_2.
\end{equation}
Indeed, the SOHS representation
\begin{equation*}
H_4(x_1,x_2)-\frac{5}{12}(x_1^4+x_2^4) = \frac{1}{24} s_1^*s_1+\frac{5}{24} s_2^*s_2 ,
\end{equation*}
where
\begin{equation*}
s_1=x_1^2+x_2^2+2(x_1+x_2)^2 \quad\text{and}\quad s_2=x_1^2-x_2^2,
\end{equation*}
shows that $H_4(X_1,X_2)\vec{v}=\vec{0}$ implies $s_1(X_1,X_2)\vec{v}=\vec{0}$, and then $X_1\vec{v}=X_2\vec{v}=\vec{0}$.
If $\ve>0$ is the ratio between the smallest positive eigenvalue of $H_4(X_1,X_2)-\frac{5}{12}(X_1^4+X_2^4)$ and the largest eigenvalue of $X_1^4+X_2^4$, then \eqref{e:ker} implies that $H_4(X_1,X_2)\succeq (\frac{5}{12}+\ve) (X_1^4+X_2^4)$.
\end{exa}

Example \ref{exa22} leads us to speculate the following.

\begin{conj}\label{c1}
Let $n,d\ge 2$. For all tuples of hermitian operators $X_1,\dots,X_n$ on a Hilbert space,
$$\ker \left(H_{2d}(X_1,\dots,X_n)-\mu_{n,d}(X_1^{2d}+\cdots+X_n^{2d})\right) = \ker X_1\cap\cdots\cap \ker X_n.$$
\end{conj}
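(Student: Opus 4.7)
The inclusion $\bigcap_j \ker X_j \subseteq \ker(H_{2d}(X)-\mu_{n,d}(X_1^{2d}+\cdots+X_n^{2d}))$ is automatic: any word of positive length in $X_1,\dots,X_n$ annihilates each $\vec v\in\bigcap_j\ker X_j$, so $H_{2d}(X)\vec v=0$ and $X_j^{2d}\vec v=0$. The content lies in the reverse inclusion.

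The plan is to exploit the SOHS structure of Proposition~\ref{p:middle}. Factoring $G_{n,d}-\mu_{n,d}M_{n,d}=S^*S$, one has $\langle \vec v,(H_{2d}(X)-\mu_{n,d}(X_1^{2d}+\cdots+X_n^{2d}))\vec v\rangle=\sum_k\|s_k(X)\vec v\|^2$, where the $s_k$ are the polynomials in $\ncpx{n}_d$ whose coefficient vectors are the rows of $S$. Hence $\vec v$ belongs to the kernel iff $s(X)\vec v=0$ for every $s\in\ncpx{n}_d$ whose coefficient vector lies in the \emph{witness space} $W:=\ker(G_{n,d}-\mu_{n,d}M_{n,d})^\perp$. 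The identity $G_{n,d}-\mu_{n,d}M_{n,d}=\pi^*(\widetilde G_{n,d}-\mu_{n,d}\widetilde M_{n,d})\pi$ together with the orthogonal decomposition $\ncpx{n}_d=\sigma(\px{n}_d)\oplus\ker\pi$ in the coefficient inner product yields $W=\sigma(V^\perp)$, where $V=\ker(\widetilde G_{n,d}-\mu_{n,d}\widetilde M_{n,d})$ and orthogonality is with respect to the weighted inner product $\langle m,m'\rangle=\delta_{m,m'}/|\alpha^{-1}(m)|$ on $\px{n}_d$. The Schur-complement computation inside the proof of Proposition~\ref{p:bound} describes $V$ concretely: the projection of $V$ onto $\operatorname{span}\{x_1^d,\dots,x_n^d\}$ equals $\R(1,\dots,1)$ when $d$ is even and $\{(c_1,\dots,c_n):\sum_j c_j=0\}$ when $d$ is odd, while the mixed-monomial coordinates of any $v\in V$ are determined by $-C^{-1}B$ applied to the projection.

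The conjecture is then reduced to showing: if $s(X)\vec v=0$ for every $s\in W$, then $X_j\vec v=0$ for each $j$. Obvious members of $W$ already give partial information. For $d$ even, each difference $x_i^d-x_j^d$ lies in $W$ and forces $X_1^d\vec v=\cdots=X_n^d\vec v=:\vec u$; for $d$ odd, $\sum_j x_j^d\in W$ forces $\sum_j X_j^d\vec v=0$. The plan is to supplement this with additional hermitian witnesses $s\in W$ for which $s(X)\succeq 0$ admits a manifest PSD decomposition whose operator kernel is $\bigcap_j\ker X_j$. This mirrors Example~\ref{exa22}: there $s_1=\sigma(3(x_1^2+x_2^2)+4x_1 x_2)\in W$ decomposes as $X_1^2+X_2^2+2(X_1+X_2)^2$, whose kernel is manifestly $\ker X_1\cap\ker X_2$.

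The principal obstacle is producing such witnesses uniformly in $n$ and $d$. A combinatorial route would exhibit an explicit $q\in V^\perp$ whose symmetrized operator value $\sigma(q)(X)$ splits as a sum of hermitian squares of suitable degree in $X_1,\dots,X_n$ with joint kernel $\bigcap_j\ker X_j$. An inductive route, viable when $d$ is even, would use the lower-degree NCHS polynomial $H_d$ as a candidate witness, invoking the conjecture at parameter $d/2$ to identify $\ker H_d(X)=\bigcap_j\ker X_j$, provided $H_d$ (possibly after rescaling) lies in $W$; the odd case would require a parallel ad hoc construction. In either route the delicate step is controlling the interplay between the pure $x_j^d$ directions and the mixed-monomial corrections forced by $-C^{-1}B$, for which no uniform closed-form pattern is presently apparent.
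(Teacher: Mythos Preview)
The statement you are attempting is labelled \emph{Conjecture} in the paper for a reason: the paper does not prove it. The only case settled there is $n=d=2$, handled in Example~\ref{exa22} by the explicit witness $s_1=x_1^2+x_2^2+2(x_1+x_2)^2$. So there is no ``paper's proof'' to compare your proposal against.

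Your structural analysis is sound. The identification $G_{n,d}-\mu_{n,d}M_{n,d}=\pi^*(\widetilde G_{n,d}-\mu_{n,d}\widetilde M_{n,d})\pi$, the orthogonal splitting $\ncpx{n}_d=\sigma(\px{n}_d)\oplus\ker\pi$, the description $W=\sigma(V^{\perp_w})$, and the parity-dependent identification of the pure-power part of $V$ via the Schur complement from Proposition~\ref{p:bound} are all correct, and the reduction ``$\vec v$ lies in the kernel iff $s(X)\vec v=0$ for every $s\in W$'' is exactly the right reformulation. The easy witnesses $x_i^d-x_j^d$ (for $d$ even) and $\sum_j x_j^d$ (for $d$ odd) are genuinely in $W$, as you claim.

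But your proposal is not a proof, and you say so yourself. The entire content of the conjecture lies in the step you flag as the ``principal obstacle'': exhibiting, for every $n,d\ge2$, a hermitian element $s\in W$ whose operator value $s(X)$ is positive semidefinite with kernel equal to $\bigcap_j\ker X_j$. Neither of your two suggested routes is carried out. For the inductive route, you have not verified that $H_d$ (or any rescaling) actually lies in $W$; this would require $h_d\in V^{\perp_w}$, which is a nontrivial identity involving the mixed-monomial block $-C^{-1}B$ that you have not computed. For the combinatorial route, no candidate $q\in V^{\perp_w}$ is proposed beyond $n=d=2$. Until one of these is made explicit, the argument remains a reformulation of the problem rather than a solution to it, and the statement retains its status as an open conjecture.
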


There are two immediate consequences of Conjecture \ref{c1}. First, the equality $H_{2d}(X_1,\dots,X_n)=\mu_{n,d}(X_1^{2d}+\cdots+X_n^{2d})$ would hold if and only if $X_1=\cdots=X_n=0$.
Second, for every tuple of hermitian matrices $X_1,\dots,X_n$ there would exist $\ve>0$ such that $H_{2d}(X_1,\dots,X_n)\succeq (\mu_{n,d}+\ve)(X_1^{2d}+\cdots+X_d^{2n})$, by the same argument as in Example \ref{exa22}. 

Let us conclude with another conjecture. Table \ref{Table:SOS} displays SOHS representations of $H_{2d}(x_1,x_2)$ for $d\le 4$. A reader may notice that only polynomials with nonnegative coefficients appear in Table \ref{Table:SOS}. This is a consequence of $G_{n,d}$ being a \emph{completely positive} matrix, which has been verified for small $n,d$. Here, a matrix is completely positive if it factors as $S^*S$ for a nonnegative matrix $S$. This notion arises in combinatorial and nonconvex quadratic optimization \cite{cpmtx}. 

\begin{conj}\label{c2}
The matrix $G_{n,d}$ is completely positive for all $n,d\in\N$.
\end{conj}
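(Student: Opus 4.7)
The plan is to leverage the Dirichlet-integral representation already used in the proof of Proposition \ref{p:G} to realize $G_{n,d}$ as a Gram matrix of componentwise nonnegative vector-valued functions on the standard simplex, and then to pass from this continuous nonnegative rank-one decomposition to a finite one. Recall from equation \eqref{e:int} that for any commutative monomial $m$ of total degree $2d$,
\[
\lambda(m)\;=\;\frac{\sqrt{n}\,(2d)!}{(2d+n-1)!}\cdot\frac{1}{|\alpha^{-1}(m)|},
\]
where the prefactor depends only on $n$ and $d$. Setting $c_{n,d}:=(2d+n-1)!/(\sqrt{n}\,(2d)!)$ and using that $\pi(u^*v)=\pi(u)\pi(v)$ in $\px{n}$, one obtains
\[
(G_{n,d})_{u,v}\;=\;c_{n,d}\,\lambda\bigl(\pi(u)\pi(v)\bigr)\;=\;c_{n,d}\int_{\Sigma_{n-1}}\pi(u)(t)\,\pi(v)(t)\,{\rm d}\Sigma_{n-1}(t)
\]
for all $u,v\in\ncx{n}_d$. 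Because each $\pi(u)$ is a product of the nonnegative coordinate functions on $\Sigma_{n-1}$, the column vector $\vec{p}(t):=(\pi(u)(t))_{u\in\ncx{n}_d}$ is entrywise nonnegative for every $t\in\Sigma_{n-1}$, and hence
\[
G_{n,d}\;=\;c_{n,d}\int_{\Sigma_{n-1}}\vec{p}(t)\,\vec{p}(t)^*\,{\rm d}\Sigma_{n-1}(t).
\]

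To upgrade this continuous nonnegative rank-one decomposition to a finite one, I would appeal to Tchakaloff's cubature theorem: since the integrand consists of polynomial functions of $t$ of total degree at most $2d$, there exist finitely many points $t_1,\dots,t_r\in\Sigma_{n-1}$ and positive weights $\omega_1,\dots,\omega_r$ for which $G_{n,d}=\sum_{i=1}^r c_{n,d}\,\omega_i\,\vec{p}(t_i)\vec{p}(t_i)^*$. Stacking the scaled row vectors $\sqrt{c_{n,d}\,\omega_i}\,\vec{p}(t_i)^*$ as the rows of a matrix $S$ then yields the desired factorization $G_{n,d}=S^*S$ with $S$ entrywise nonnegative. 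Alternatively, and more elementarily, one may observe that the cone of $N\times N$ completely positive matrices is closed, and that ordinary Riemann sums of the above integral are manifestly completely positive, so the limit $G_{n,d}$ inherits complete positivity.

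The main obstacle is not the truth of the conjecture, which the argument above settles, but rather extracting from it a combinatorially meaningful factorization. The approach yields neither an explicit cubature nor a factor $S$ whose number of rows equals $\rk G_{n,d}=\binom{n-1+d}{d}$, the minimal rank one would hope to match in view of Proposition \ref{p:middle} and the tabulated examples. Constructing such an explicit nonnegative factorization---say, a cubature supported on $\binom{n-1+d}{d}$ points in $\Sigma_{n-1}$, or a direct combinatorial identity realizing $G_{n,d}$ as a nonnegative combination of outer products indexed by $\cx{n}_d$---appears to require genuinely new ideas and is, I expect, the hard part of any refined form of Conjecture \ref{c2}.
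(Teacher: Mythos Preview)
The paper does not prove this statement: it is explicitly labeled as a conjecture and left open, with only numerical evidence (Table~\ref{Table:SOS}) offered in its support. So there is no ``paper's own proof'' to compare against.

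Your argument, however, is correct and actually settles the conjecture as stated. The identification
\[
G_{n,d}\;=\;c_{n,d}\int_{\Sigma_{n-1}}\vec{p}(t)\,\vec{p}(t)^*\,{\rm d}\Sigma_{n-1}(t),
\qquad \vec{p}(t)=\bigl(\pi(u)(t)\bigr)_{u\in\ncx{n}_d}\ge 0,
\]
follows immediately from \eqref{e:int} and the definition of $G_{n,d}$, and both of your discretization mechanisms are sound: Tchakaloff's theorem produces a cubature supported in $\Sigma_{n-1}$ (so the atoms $\vec{p}(t_i)$ remain entrywise nonnegative), while the closure route uses only that the completely positive cone is closed and that Riemann sums of the integrand are manifestly completely positive. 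Either way, $G_{n,d}=S^*S$ with $S\ge 0$ entrywise.

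You are also right to separate this from the stronger, unproven refinement. Complete positivity alone does not guarantee a nonnegative factorization with $\rk G_{n,d}=\binom{n-1+d}{d}$ factors, since the cp-rank of a completely positive matrix can strictly exceed its ordinary rank. The rank-matching nonnegative decompositions visible in Table~\ref{Table:SOS} for $n=2$ are therefore genuinely more than what your argument delivers, and constructing them in general (via an explicit cubature on $\binom{n-1+d}{d}$ simplex points or otherwise) remains the substantive open problem. It may be worth pointing out to the authors that the bare conjecture, as phrased, falls to the integral representation they already have in hand.
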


In the context of this paper, Conjecture \ref{c2} implies that every NCHS polynomial admits an SOHS representation involving only polynomials with nonnegative coefficients, that is, an SOHS representation without any term cancellations.

\begin{table}
\begin{equation*}
    \begin{array}{c|c|l}
        d & \lambda_i & s_i \\
        \hline
        1   & 1  &  x_1 +\frac{1}{2} x_2 \\[3pt]
            & \frac{3}{4} & x_2\\
        \hline
        2   &  1 & x_1^2 +\frac{1}{4} x_1 x_2 +\frac{1}{4} x_2 x_1 +\frac{1}{6} x_2^2\\[3pt]
            & \frac{5}{9} & x_2^2 \\[3pt]
            & \frac{5}{48} & x_1 x_2 +x_2 x_1 +2 x_2^2  \\
        \hline
        3   & 1 & x_1^3 +\frac{1}{6} x_1^2 x_2 +\frac{1}{6} x_1 x_2 x_1 +\frac{1}{15} x_1 x_2^2 +\frac{1}{6} x_2 x_1^2 +\frac{1}{15} x_2 x_1 x_2 +\frac{1}{15} x_2^2 x_1 +\frac{1}{20} x_2^3 \\[3pt]
            & \frac{7}{16} & x_2^3 \\[3pt]
            & \frac{7}{180} & x_1^2 x_2 +x_1 x_2 x_1 +x_1 x_2^2 +x_2 x_1^2 +x_2 x_1 x_2 +x_2^2 x_1 +\frac{3}{2} x_2^3 \\[3pt]
            & \frac{7}{300} & x_1 x_2^2 +x_2 x_1 x_2 +x_2^2 x_1 +\frac{9}{2} x_2^3 \\
            \hline
        4   &   1 & x_1^4 +\frac{1}{8} x_1^3 x_2 +\frac{1}{8} x_1^2 x_2 x_1 +\frac{1}{28} x_1^2 x_2^2 +\frac{1}{8} x_1 x_2 x_1^2 +\frac{1}{28} x_1 x_2 x_1 x_2 +\frac{1}{28} x_1 x_2^2 x_1 \\[3pt]
        &&\qquad +\frac{1}{56} x_1 x_2^3 +\frac{1}{8} x_2 x_1^3 +\frac{1}{28} x_2 x_1^2 x_2 +\frac{1}{28} x_2 x_1 x_2 x_1 +\frac{1}{56} x_2 x_1 x_2^2 +\frac{1}{28} x_2^2 x_1^2 \\[3pt]
        &&\qquad+\frac{1}{56} x_2^2 x_1 x_2 +\frac{1}{56} x_2^3 x_1 +\frac{1}{70} x_2^4 \\[3pt] 
&   \frac{9}{25} & x_2^4 \\[3pt]            
&   \frac{9}{448} & x_1^3 x_2 +x_1^2 x_2 x_1 +\frac{2}{3} x_1^2 x_2^2 +x_1 x_2 x_1^2 +\frac{2}{3} x_1 x_2 x_1 x_2 +\frac{2}{3} x_1 x_2^2 x_1 +\frac{3}{5} x_1 x_2^3 \\[3pt]
&&\qquad +x_2 x_1^3 +\frac{2}{3} x_2 x_1^2 x_2 +\frac{2}{3} x_2 x_1 x_2 x_1 +\frac{3}{5} x_2 x_1 x_2^2 +\frac{2}{3} x_2^2 x_1^2 +\frac{3}{5} x_2^2 x_1 x_2 \\[3pt]
&&\qquad +\frac{3}{5} x_2^3 x_1 +\frac{4}{5} x_2^4 \\[3pt] 
&   \frac{3}{400} & x_1 x_2^3 +x_2 x_1 x_2^2 +x_2^2 x_1 x_2 +x_2^3 x_1 +8 x_2^4 \\[3pt] 
&   \frac{1}{245} & x_1^2 x_2^2 +x_1 x_2 x_1 x_2 +x_1 x_2^2 x_1 +\frac{9}{4} x_1 x_2^3 +x_2 x_1^2 x_2 +x_2 x_1 x_2 x_1 +\frac{9}{4} x_2 x_1 x_2^2 \\[3pt]
&&\qquad+x_2^2 x_1^2  +\frac{9}{4} x_2^2 x_1 x_2 +\frac{9}{4} x_2^3 x_1 +6 x_2^4 \\[3pt] 
    \end{array}
\end{equation*}
\caption{SOHS representations of $H_{2d}(x_1,x_2) = \sum_i \lambda_i s_i^*s_i$ for
$d=1,2,3,4$.}
\label{Table:SOS}
\end{table}

\bibliographystyle{alpha}
\bibliography{bibfile}
\end{document}